\newtheorem{Theorem}{Theorem}[section]
\newtheorem{Lemma}[Theorem]{Lemma}
\newtheorem{Prop}[Theorem]{Proposition}
\newtheorem{Rem}[Theorem]{Remark}
\newtheorem{Exa}[Theorem]{Example}
\def\cF{\mathcal{F}}
\def\cL{\mathcal{L}}
\def\cN{\mathcal{N}}
\def\cS{\mathcal{S}}
\def\cT{\mathcal{T}}
\def\fX{\mathfrak{X}}
\def\bbD{\mathbb{D}} 
\def\Erw{\mathbb{E}}
\def\N{\mathbb{N}}
\def\Prob{\mathbb{P}} 
\def\R{\mathbb{R}}      
\def\T{\mathbb{T}}
\def\Var{\mathbb{V}{\rm ar\,}}
\def\eps{\varepsilon}
\def\vph{\varphi}
\def\1{\vec{1}}
\def\3{{\ss}}
\def\eqdist{\stackrel{d}{=}}
\def\RA{\Rightarrow}
\def\wh{\widehat}
\def\ovl{\overline}
\def\interior{\textsl{int}}
\DeclareMathOperator\esssup{ess\,sup}
\begin{document}

\title*{Thin tails of fixed points of the nonhomogeneous smoothing transform}
\titlerunning{Thin tails of fixed points}
\author{Gerold Alsmeyer$\,^{1}$ and Piotr Dyszewski$\,^{2}$}
\institute{$^{1}$ Inst.~Math.~Statistics, Department
of Mathematics and Computer Science, University of M\"unster,
Orl\'eans-Ring 10, D-48149 M\"unster, Germany.\at
\email{gerolda@math.uni-muenster.de}\at
$^{2}$ Institute of Mathematics, University of Wroc\l{}aw, pl. Grunwaldzki
2/4, 50-384 Wroc\l{}aw, Poland.\at
\email{piotr.dyszewski@math.uni.wroc.pl}\at
The first author was partially supported by the Deutsche Forschungsgemeinschaft (SFB 878).
The second author was partially supported by the National Science Centre, Poland (Sonata Bis, grant number DEC-2014/14/E/ST1/00588).}

\maketitle

\abstract{For a given random sequence $(C,T_{1},T_{2},\ldots)$ with nonzero $C$ and a.s. finite number of nonzero $T_{k}$, the nonhomogeneous smoothing transform $\cS$ maps the law of a~real random variable $X$ to the law of $\sum_{k\ge 1}T_{k}X_{k}+C$, where $X_{1},X_{2},\ldots$ are independent copies of $X$ and also independent of $(C,T_{1},T_{2},\ldots)$. This law is a~fixed point of $\cS$ if the stochastic fixed-point equation (SFPE) $X\eqdist\sum_{k\ge 1}T_{k}X_{k}+C$ holds true, where $\eqdist$ denotes equality in law. Under suitable conditions including $\Erw C=0$ (see \eqref{eq:standing}), $\cS$ possesses a~unique fixed point within the class of centered distributions, called the canonical solution to the above SFPE because it can be obtained as a~certain martingale limit in an~associated weighted branching model. The present work provides conditions on $(C,T_{1},T_{2},\ldots)$ such that the canonical solution exhibits right and/or left Poisson tails and the abscissa of convergence of its moment generating function can be determined. As a particular application, the right tail behavior of the Quicksort distribution is found.}
\bigskip

{\noindent \textbf{AMS 2000 subject classifications:}
60H25 (60E10) \ }

{\noindent \textbf{Keywords:} nonhomogeneous smoothing transform, stochastic fixed point, moment generating function, exponential moment, Poisson tail, weighted branching process, forward and backward equation, Quicksort distribution}

\section{Introduction}\label{sec:introduction}

Given a sequence $(C,T_{1},T_{2},\ldots)$ of real-valued random variables with an a.s. finite number $N:=\sum_{k\ge 1}\1_{\{T_{k}\ne 0\}}$ of nonzero $T_{k}$ and a nonzero random variable $C$, we consider the associated nonhomogeneous smoothing transform
\begin{equation*}
\cS: F\ \mapsto\ \cL\left(\sum_{k\ge 1}T_{k}X_{k}+C\right)
\end{equation*}
which maps a distribution $F$ on $\R$ to the law of $\sum_{k\ge 1}T_{k}X_{k}+C$, where $X_{1},X_{2},\ldots$ are independent and identically distributed (iid) with common law $F$ and independent of $(C,T_{1},T_{2},\ldots)$. In the case when $\cS F=F$, the distribution $F$ is called a fixed point of $\cS$. In terms of random variables, this may be stated as a so-called \emph{stochastic fixed-point equation (SFPE)}, namely
\begin{equation}\label{eq:sfpe0}
X\ \eqdist\ \sum_{k\ge 1}T_{k}X_{k}+C,
\end{equation}
where $X$ is a copy of $X_{1},X_{2},\ldots$ and $\eqdist$ means equality in law.

Special instances of \eqref{eq:sfpe0} appear above all in the asymptotic analysis of objects that exhibit a~certain kind of random recursive structure like random trees, branching processes, or recursive algorithms and data structures \cite{AldBan:05,Drmota:09,NeinRusch:05}, but also in stochastic geometry \cite{PenroseWade:04,PenroseWade:06}. As a particular and prominent example, we mention the \emph{Quicksort equation}, due to R\"osler \cite{Roesler:91}, viz.
\begin{equation}\label{eq:quicksort}
X^{(qs)}\ \stackrel{d}{=}\ UX_{1}^{(qs)}+(1-U) X_{2}^{(qs)} +g(U),
\end{equation}
which characterizes, uniquely within the class of distributions on $\R$ with mean 0 and finite variance, the limit distribution (as $n\to\infty$) of the normalized number of key comparisons made by \texttt{Quicksort}, a recursive divide-and-conquer algorithm, to sort a list of $n$ distinct numbers the order of which was chosen uniformly at random. Here $U$ has a uniform distribution on $(0,1)$ and the bounded function $g:(0,1)\to\R$ is defined as
$$ g(t)\ :=\ 2t\log t+2(1-t)\log(1-t)+1. $$
R\"osler~\cite{Roesler:01} also derived the corresponding equation for a variation of the algorithm, the median-of-three version of \texttt{Quicksort}, namely the \emph{median-of-three Quicksort equation}
\begin{equation}\label{eq:mtst}
X^{(mtqs)}\ \stackrel{d}{=}\ M X_{1}^{(mtqs)} + (1-M)X_{2}^{(mtqs)} + f(M),
\end{equation}
where as before $X^{(mtqs)}$ characterizes the limit of the normalized number of required key comparisons, $f:(0,1)\to\R$ is defined as 
$$ f(m)\ :=\ 1+ \frac{12}{7}(m\log(m) + (1-m)\log(1-m)) $$
and $M=\textsl{med}(U_{1},U_{2},U_{3})$ equals the median of three independent uniform $(0,1)$ random variables $U_{1},U_{2}$ and $U_{3}$. The latter appears because the median-of-three version of \texttt{Quicksort} chooses the partitioning element (pivot) of a sublist in each division step as the median of a small random sample (here of size 3). This makes for a more balanced partitioning at the cost of computing the median. Our results will show that $X^{(mtqs)}$ has thinner tails than $X^{(sq)}$, which confirms that for large $n$ the median-of-three version of \texttt{Quicksort} is less vulnerable to the randomness of the input than its classical counterpart. For details see Section~\ref{sec:examples}, notably \eqref{eq:qsrighttail} and \eqref{eq:qsleft} as opposed to \eqref{eq:mtqsright} and \eqref{eq:mtqsleft}, respectively.

As a third example, also from the analysis of algorithms, we mention is the \emph{2-dimensional quad tree equation} obtained by Neininger and R\"uschendorf \cite{NeinRusch:99}, viz.
\begin{align}
\begin{split}\label{eq:tdq}
X^{(qt)}\ \stackrel{d}{=}\ U_{1}U_{2} X_{1}^{(qt)} &+ U_{1}(1-U_{2})X_{2}^{(qt)}\\
&+ (1-U_{1})U_{2} X_{3}^{(qt)} + (1-U_{1})(1-U_{2}) X_4^{(qt)} + h(U_{1},U_{2}),
\end{split}
\end{align}
where $h : (0,1)^{2} \to \R$ is defined as
\begin{align*}
h(u_{1},u_{2})\ :=\ &1+u_{1}u_{2}\log(u_{1}u_{2})+(1-u_{1})u_{2} \log((1-u_{1})u_{2})\\ 
&+ u_{1}(1-u_{2})\log(u_{1}(1-u_{2})) + (1-u_{1})(1-u_{2})\log((1-u_{1})(1-u_{2}))
\end{align*}
and $U_{1},\,U_{2}$ are again independent uniform $(0,1)$ variables. Here the distribution of $X^{(qt)}$ characterizes the limit of the normalized so-called internal path length in a random quad tree, which is a data structure used to store and retrieve data from a multidimensional data set. For a formal definition of a~quad tree and its internal path length see \cite{NeinRusch:99}. We will return to the previous examples in Section~\ref{sec:examples} so as to illustrate some of our results.

\vspace{.2cm}
In the case when $N=1$, the SFPE \eqref{eq:sfpe0} takes the simple form
\begin{equation}\label{eq:rde}
X\ \eqdist\ T_{1}X+C,
\end{equation}
called \emph{random difference equation}. Random variables satisfying this equation, nowadays known as \emph{perpetuities} due to a special interpretation in the context of Mathematical Finance, appear in various quite different areas like number theory, combinatorics, branching processes in random environment, or additive-increase multiplicative-decrease (AIMD) algorithms~\cite{GuilleminRobertZwart:04}.

The principal aim of this work is to provide conditions on $(C,T_{1},T_{2},\ldots)$ such that the solutions to \eqref{eq:sfpe0} exhibit thin tails in the sense that they possess finite exponential moments. More precisely, we will study the domain of the moment generating function (mgf) of a random variable $X$ solving \eqref{eq:sfpe0}, thus
$$ \{\theta\in\R:\Erw e^{\theta X}<\infty\}, $$
and in fact give a precise description of this set in some special cases including random difference equations. Regarding the latter, this will answer a question posed in \cite[Section 4]{AlsIksRoe:09} about the abscissa of convergence of the mgf of $X$ (see results in Section \ref{sec:exponential}), i.e.
\begin{align*}
&r^{*}(X)\ :=\ \sup\{\theta\in\R:\Erw e^{\theta X}<\infty\}
\shortintertext{and}
&r_{*}(X)\ :=\ \inf\{\theta\in\R:\Erw e^{\theta X}<\infty\}.
\end{align*}
Since, by Lemma 3.6 in \cite{BalDalKlu:04},
\begin{align*}
&r^{*}(X)\ :=\ \limsup_{x\to\infty}\frac{-\log\Prob[X>x]}{x}
\shortintertext{and}
&r_{*}(X)\ :=\  \liminf_{x\to\infty}\frac{\log\Prob[X<-x]}{x}
\end{align*}
this gives also information about the rate of exponential decay of the right and left tail of $X$.

Building on observations made by Goldie and Gr\"ubel  \cite{GolGru:96} and later Hitczenko and Weso\l owski \cite{HitczenkoWes:09}, we will also investigate the relation between the tail of $X$ and $\max_{1\le k\le N}T_{k}$. More precisely, we will show that $X$ exhibits Poissonian tails, viz.
$$ \log\Prob[X>x]\ \simeq\ -\frac{\gamma}{\|C^{+}\|_{\infty}}x\log x\quad\text{as }x\to\infty $$
provided that $C$ is a.s. bounded, $T_{1},T_{2},\ldots$ are nonnegative and
\begin{equation*}
\left\|\sum_{k=1}^{N}T_{k}^{p}\right\|_{\infty}\ \le\ 1\quad\text{and}\quad\Prob\left[\max_{1\le k\le N}T_{k}\in (1-\eps,1]\bigg|C>c\right]\ \stackrel{\eps\downarrow 0}{\asymp}\ \eps^{\gamma}
\end{equation*}
for some $p,\gamma>0$ and all $c<\|C^{+}\|_{\infty}$. Here and throughout $\|\cdot\|_{r}$ denotes the usual $L^{r}$-norm for $r\in [1,\infty]$ and
$f(\eps)\stackrel{\eps\downarrow 0}{\asymp}g(\eps)$ means that
$$ 0\ <\ \liminf_{\eps\downarrow 0}\frac{f(\eps)}{g(\eps)}\ \le\ \limsup_{\eps\downarrow 0}\frac{f(\eps)}{g(\eps)}\ <\ \infty. $$

We have organized the paper as follows. Section 2 introduces some basic notation and assumptions. Our main results are stated in Sections 3 and 4, while Section 5 discusses some examples in connection with them. Proofs of the main results are provided in Section 6.

\section{Preliminaries}\label{sec:preliminaries}

Let (the law of) $X$ be a solution to \eqref{eq:sfpe0}. Due to the independence of $(C,T_{1},T_{2},\ldots)$ and $X_{1},X_{2},\ldots$ on the right-hand side the SFPE remains valid under any permutation of the nonzero $T_{k}$. Therefore and since $N$ is a.s. finite, we may assume without loss of generality that
$$ T_{1}\ge T_{2}\ge\ldots\ge T_{N}\quad\text{and}\quad T_{N+1}=T_{N+2}=\ldots =0, $$
so that \eqref{eq:sfpe0} becomes
\begin{equation}\label{eq:sfpe}
X\ \eqdist\ \sum_{k =1}^{N}T_{k}X_{k} +C.
\end{equation}
Since we are dealing with integrable solutions $X$, a replacement of $X, X_{k}$ and $C$ in \eqref{eq:sfpe} with their centerings 
$$ X^{0}:=X-\Erw X,\quad X_{k}^{0}:=X_{k}-\Erw X_{k}\quad\text{and}\quad C^{0}:=C-\Erw X\left(1-\sum_{k=1}^{N}T_{k}\right), $$
respectively, leads to a SFPE of the same type, namely
$$ X^{0}\ \eqdist\ \sum_{k=1}^{N}T_{k}X_{k}^{0}+C^{0}. $$ 
Hence we may w.l.o.g. assume for the rest of this article that
$$ \Erw X\ =\ \Erw C\ =\ 0. $$
The power sums
$$ \Sigma_{\alpha}\ :=\ \sum_{k=1}^{N}|T_{k}|^{\alpha} $$
for $\alpha\in\R_{\geqslant}=[0,\infty)$ will play an important role in our analysis, where $\Sigma_{0}=N$. If the $T_{k}$ are $[-1,1]$-valued, then furthermore
$$ \Sigma_{\infty}\ :=\ \lim_{\alpha\to\infty}\Sigma_{\alpha}\ =\ \sum_{k=1}^{N}\1_{\{|T_{k}|=1\}}, $$
which is one of the leading parameters in the description of the domain of the mgf of a solution to \eqref{eq:sfpe}.

\vspace{.2cm}
Given an infinite-order Ulam-Harris tree $\T=\{\varnothing\}\cup\bigcup_{n\ge 1}\N^{n}$ with weight $T_{k}(v)$ attached to the edge connecting $v$ with $v k$ and weight $C(v)$ attached to the subtree rooted at $v$, suppose that the $(C(v),T_{1}(v),T_{2}(v),\ldots)$ are iid copies of $(C,T_{1},T_{2},\ldots)$. Let also $L(v)$ be the total weight of the branch from the root to $v$ obtained by multiplication of the edge weights along this path. Then put
$$\Sigma_{\alpha,n}\ :=\ \sum_{|v|=n} |L(v)|^{\alpha}$$
for $n \in \N$ , $\alpha\in\R$ and note that $\|\Sigma_{\alpha,n}\|_p = \|\Sigma_{\alpha}\|_{p}^{n}$ for $p \ge 1$. Finally,
$$ N_{n}\ :=\ \# \{v : |v| = n \mbox{ and } L(v) > 0\}\ =\ \Sigma_{0,n}$$
denotes the number of branches of length $n$ with positive weight. Thus $N_{1}\eqdist N$, and with $N_{0} = 1$, the sequence $(N_{n})_{n \ge 0}$ 
forms a simple Galton-Watson process.

Next, with $X(v)$ denoting iid copies of $X$ which are independent of all other occurring random variables, we have (by $n$-fold interation of \eqref{eq:sfpe}) that
\begin{equation}\label{eq:iterates of SFPE}
X\ \eqdist\ \sum_{|v|=n}L(v)X(v)\ +\ \sum_{k=0}^{n-1}\sum_{|v|=k}L(v)C(v)
\end{equation}
for all $n\ge 1$, where $L(\varnothing):=1$. If the second term on the right-hand side of~\eqref{eq:iterates of SFPE}, that is $W_{n-1}:=\sum_{k=0}^{n-1}\sum_{|v|=k}L(v)C(v)$, converges a.s. to
$$ W\ :=\ \sum_{k\ge 0}\sum_{|v|=k}L(v)C(v), $$
then $W$ is also a solution to the SFPE \eqref{eq:sfpe}, called \emph{canonical solution}. Notice that
$$ W_{n-1}\ \eqdist\ \cS^{n}(\delta_{0}) $$
for each $n\in\N$, where $\delta_{0}$ denotes the Dirac measure at 0.
The recursive structure provides us with two useful equations for the $W_{n}$, the first of which being
\begin{equation}\label{eq:forward Wn}
W_{n}\ =\ W_{n-1}\ +\ \sum_{|v|=n}L(v)C(v),
\end{equation} 
called \emph{forward equation}, which is just a consequence of the definition of $W_{n}$. Since $W_{n} \eqdist \cS \cL( W_{n-1})$ we also have
\begin{equation}\label{eq:backward Wn}
W_{n}\ =\ \sum_{k=1}^{N(\varnothing)}T_{k}(\varnothing)W_{n-1}(k)+C(\varnothing),
\end{equation}
called \emph{backward equation}, where $W_{n-1}(1),W_{n-1}(2),...$ are independent copies of $W_{n-1}$ and also independent of $(C(\varnothing),T_{1}(\varnothing),T_{2}(\varnothing),...)$. We further point out that, given $p\in [1,2]$, $(W_{n})_{n\ge 0}$ forms a $L^{p}$-convergent, zero mean martingale if
\begin{equation}\label{eq:standing}
\Erw C\ =\ 0,\quad\|C\|_{p}\ <\ \infty\quad\text{and}\quad\|\Sigma_{p}\|_{1}\ <\ 1.
\end{equation}
The martingale property is easily verified, and the $L^{p}$-convergence follows from
\begin{align*}
\|W_{m+n}-W_{m}\|_{p}\ &\le\ \sum_{k=m+1}^{m+n}\left\|\sum_{|v|=k}L(v)C(v)\right\|_{p}\\
&\le\ 2^{1/p}\|C\|_{p}\sum_{k=m+1}^{m+n}\|\Sigma_{p,k}\|_{p}\\
&\le\ 2^{1/p}\|C\|_{p}\,\frac{\|\Sigma_{p}\|_{p}^{m+1}}{1-\|\Sigma_{p}\|},
\end{align*}
for all $m,n\ge 1$, where for the second line we have used 
\begin{itemize}
\item the \emph{double martingale structure} of $(W_{n})_{n\ge 0}$, first systematically utilized in \cite{AlsRoe:04} for the study of moments of the ordinary Galton-Watson process and later in \cite{AlsKuhl:10} for general weighted branching processes, 
\item the Topchi\u\i-Vatutin inequality for martingales as stated in \cite{AlsRoe:03}, here applied to $\left\|\sum_{|v|=k}L(v)C(v)\right\|_{p}$. 
\end{itemize}
The double martingale structure refers to the fact that each increment of $(W_{n})_{n\ge 0}$, viz.
$$ W_{n}-W_{n-1}\ =\ \sum_{|v|=n}L(v)C(v)\quad (n\ge 1), $$
forms itself a martingale sum when conditioned upon $\sigma(L(v),\,|v|\le n)$.

\vspace{.1cm}
As a particular consequence of \eqref{eq:standing}, $(W_{n})_{n\ge 0}$ is uniformly integrable and thus a Doob martingale, i.e. $W_{n}=\Erw(W|\cF_{n})$ for each $n\ge 0$, where $\cF_{n}=\sigma(W_{0},...,W_{n})$. If $p=2$, we further have
\begin{align*}
\|W_{n}\|_{2}^{2}\ =\ \|W_{n-1}\|_{2}^{2}\ +\ \|\Sigma_{2,n}\|_{1}\|C\|_{2}^{2}
\ =\ \|W_{n-1}\|_{2}^{2}\ +\ \|\Sigma_{2}\|_{1}^{n}\|C\|_{2}^{2}
\end{align*}
for each $n\ge 1$, giving
\begin{equation*}
\sigma_{W}^{2}\ :=\ \|W\|_{2}^{2}\ =\ \sup_{n\ge 0}\|W_{n}\|_{2}^{2}\ =\ \frac{\|C\|_{2}^{2}}{1-\|\Sigma_{2}\|_{1}}\ =\ \frac{\Erw C^{2}}{1-\Erw \Sigma_{2}}.
\end{equation*}
Let us finally point out that the (law of the) canonical solution $W$ is in fact the unique zero-mean fixed point of $\cS$ in $L^{p}$, see R\"osler \cite[Thm.~3]{Roesler:92} for the case $p=2$ and \cite[Thm.~1 and Thm.~3]{Alsmeyer:13} for general $p\in [1,2]$. In view of our standing assumption this means that exponential moments can only exist for this particular solution.

\vspace{.2cm}
We proceed with the introduction of some further notation. The mgf's of $C,W_{n}$ and $W$ are denoted by $\vph,\Psi_{n}$ and $\Psi$ with canonical domains $\bbD_{\vph},\bbD_{\Psi_{n}}$ and $\bbD_{\Psi}$, respectively, thus
$$ \bbD_{\vph}\ :=\ \{\theta\in\R:\Erw e^{\theta C}<\infty\},\quad\text{etc.} $$
We close this section with a basic lemma and note beforehand that, if $\bbD_{\vph}\ne\{0\}$, we may assume w.l.o.g. that $\bbD_{\vph}\cap\R_{\geqslant}\ne\{0\}$, for otherwise the latter holds after switching from $C$ to $-C$ (and thus from $W$ to $-W$).

\begin{Lemma}\label{lem:mgfconv}
Suppose that \eqref{eq:standing} holds for some $p\in [1,2]$. Then
\begin{description}[(b)]
\item[(a)] $\vph(\theta)=\Psi_{0}(\theta)\le\Psi_{1}(\theta)\le...\le\Psi(\theta)$ for all $\theta\in\R$ and thus $\bbD_{\Psi}\subset\bbD_{\vph}$.\vspace{.05cm}
\item[(b)] $\Psi(\theta)=\lim_{n\to\infty}\Psi_{n}(\theta)$ for all $\theta\in\R$.
\end{description}
\end{Lemma}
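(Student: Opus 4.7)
The plan is to leverage the fact, established just prior to the lemma, that under \eqref{eq:standing} the sequence $(W_{n})_{n\ge 0}$ is a uniformly integrable mean-zero Doob martingale converging both a.s.\ and in $L^{1}$ to $W$ and satisfying $W_{n}=\Erw(W|\cF_{n})$ for every $n\ge 0$. Noting further that $W_{0}=L(\varnothing)C(\varnothing)=C(\varnothing)\eqdist C$, we obtain $\vph=\Psi_{0}$ directly from the definitions of the mgf's involved.

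For part (a), I would invoke the conditional Jensen inequality applied to the convex function $x\mapsto e^{\theta x}$. Since $W\in L^{1}$, this yields the a.s.\ bound
$$e^{\theta W_{n}}\ =\ e^{\theta\Erw(W|\cF_{n})}\ \le\ \Erw\bigl(e^{\theta W}\bigm|\cF_{n}\bigr),$$
and taking expectations gives $\Psi_{n}(\theta)\le\Psi(\theta)$ as an inequality in $[0,\infty]$. The same argument applied to the martingale identity $W_{n}=\Erw(W_{n+1}|\cF_{n})$, valid since $W_{n+1}\in L^{1}$, produces the intermediate bound $\Psi_{n}(\theta)\le\Psi_{n+1}(\theta)$. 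Chaining these together establishes $\vph=\Psi_{0}\le\Psi_{1}\le\ldots\le\Psi$, whence $\bbD_{\Psi}\subset\bbD_{\vph}$ is immediate.

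For part (b), the monotonicity from (a) guarantees that $\lim_{n\to\infty}\Psi_{n}(\theta)$ exists in $[0,\infty]$ and is bounded above by $\Psi(\theta)$. The reverse inequality follows from the a.s.\ convergence $W_{n}\to W$, which by continuity of the exponential gives $e^{\theta W_{n}}\to e^{\theta W}$ a.s., combined with Fatou's lemma applied to the nonnegative integrands:
$$\Psi(\theta)\ =\ \Erw\bigl[\liminf_{n\to\infty}e^{\theta W_{n}}\bigr]\ \le\ \liminf_{n\to\infty}\Psi_{n}(\theta).$$
This treats both $\theta\in\bbD_{\Psi}$ and $\Psi(\theta)=\infty$ simultaneously and completes the argument. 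I do not anticipate any substantive obstacle: the proof rests entirely on standard tools—conditional Jensen for the integrable variable $W$, Fatou for nonnegative integrands, and the martingale convergence already supplied by \eqref{eq:standing}—with the only minor care being that all inequalities must be read in the extended reals $[0,\infty]$, which is harmless because the integrands $e^{\theta W_{n}}$ and $e^{\theta W}$ are everywhere nonnegative.
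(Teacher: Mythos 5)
Your proof is correct and follows essentially the same route as the paper: both exploit the Doob-martingale identity $W_{n}=\Erw(W|\cF_{n})$ together with (conditional) Jensen for the convex map $x\mapsto e^{\theta x}$ to get the monotone chain in (a), and then a.s.\ convergence of $W_{n}$ plus Fatou's lemma for (b). The only cosmetic difference is that you state the conditional-Jensen step pathwise and explicitly note the extended-real reading, whereas the paper phrases it via the tower property; the substance is identical.
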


\begin{proof}
As shown above, condition \eqref{eq:standing} for some $p\in [1,2]$ ensures that $(W_{n})_{n\ge 0}$ is a zero-mean Doob martingale, in particular $\Erw W=0$.
Consequently, $\Psi$ is convex on its domain with unique minumum at 0. Moreover, by using Jensen's inequality and $W_{0}=C(\varnothing)$,
\begin{align*}
\Psi(\theta)\ =\ \Erw e^{\theta W}\ &=\ \Erw\left[\Erw(e^{\theta W}|\cF_{n})\right]\ \ge\ \Erw e^{\theta\Erw(W|\cF_{n})}\ =\ \Erw e^{\theta W_{n}}\ =\ \Psi_{n}(\theta),
\end{align*}
for all $\theta\ge 0$ and $n\ge 0$. Similarly, $\Psi_{n}(\theta)\ge\Psi_{n-1}(\theta)$ can be shown, which is in fact a trivial consequence of the submartingale property of $(e^{\theta W_{n}})_{n\ge 0}$ if the latter sequence is integrable. We have thus proved (a), and (b) then follows because, by an appeal to Fatou's lemma, we also have
$$ \Psi(\theta)\ =\ \Erw\lim_{n\to\infty}e^{\theta W_{n}}\ \le\ \lim_{n\to\infty}\,\Erw e^{\theta W_{n}}\ =\ \lim_{n\to\infty}\Psi_{n}(\theta) \quad\text{for all }\theta\in\R\text{\qed} $$
\end{proof}

\section{Exponential moments}\label{sec:exponential}

The most natural approach to study $\Psi$ is via the functional equation it satisfies as a consequence of the SFPE \eqref{eq:sfpe}. Namely, writing the latter in terms of mgf and conditioning upon $(C,T_{1},T_{2},\ldots)$ leads to
\begin{equation}\label{eq:funsfpe}
\Psi(\theta)\ =\ \Erw\left[e^{\theta C}\prod_{k=1}^{N}\Psi(T_{k}\theta)\right]\quad\text{for }\theta\in\bbD_{\Psi}.
\end{equation}
Bound for a determination of $\bbD_{\Psi}$, Theorem \ref{thm:startingpoint} below constitutes a good starting point because it allows us to focus thereafter on the situation when
\begin{equation}\label{eq:leq1}
\max_{1\le k\le N}|T_{k}|\ \le\ 1\quad\text{a.s.}
\end{equation}
However, we first state the following basic result, in essence due to Goldie and Gr\"ubel \cite{GolGru:96}, about the situation when \eqref{eq:leq1} fails.

\begin{Prop}\label{prop:golgru}
If \eqref{eq:standing} holds for some $p\in [1,2]$ and
$$ \Prob\left[\max_{1\le k\le N}|T_{k}|>1\right]\ >\ 0, $$
then
$$ \Erw e^{\theta|W|}\ =\ \infty $$
for all $\theta\in\R\backslash\{0\}$, thus $\bbD_{\Psi}\cap\R_{\geqslant}=\{0\}$ or $\bbD_{\Psi}\cap\R_{\leqslant}=\{0\}$.
\end{Prop}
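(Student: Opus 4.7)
I would argue by contradiction. Suppose that $\Erw e^{\theta_{0}|W|}<\infty$ for some $\theta_{0}>0$, so that $\{\theta_{0},-\theta_{0}\}\subset\bbD_{\Psi}$, and derive a lower bound for $\Psi(\theta_{0})$ that diverges, contradicting its finiteness.

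The key device is an \emph{adaptive spine} in the weighted branching tree: by hypothesis one may pick $\eta>1$ and $q:=\Prob[\max_{1\le k\le N}|T_{k}|\ge\eta]>0$. Set $v_{0}:=\varnothing$ and $v_{i}:=v_{i-1}k_{i}^{*}$, where $k_{i}^{*}$ is any index attaining $\max_{k}|T_{k}(v_{i-1})|$, and put $A_{n}:=\bigcap_{i=1}^{n}\{\max_{k}|T_{k}(v_{i-1})|\ge\eta\}$. The iid structure across generations yields $\Prob(A_{n})=q^{n}$, while $|L(v_{n})|\ge\eta^{n}$ on $A_{n}$ by construction. Taking mgfs in \eqref{eq:iterates of SFPE} after conditioning on the tree gives the identity
$$ \Psi(\theta_{0})\ =\ \Erw\!\left[e^{\theta_{0}W_{n-1}}\!\prod_{|v|=n}\!\Psi(\theta_{0}L(v))\right], $$
and since $\Psi\ge 1$ on $\R$, dropping every factor except the one at $v=v_{n}$ and restricting to $A_{n}$ leaves
$$ \Psi(\theta_{0})\ \ge\ M(\theta_{0}\eta^{n})\cdot h_{n},\qquad h_{n}\,:=\,\Erw\!\left[e^{\theta_{0}W_{n-1}}\1_{A_{n}}\right], $$
where $M(\phi):=\min(\Psi(\phi),\Psi(-\phi))\in(0,\infty]$ is nondecreasing on $[0,\infty)$ by the convexity of $\Psi$ around its unique minimum $0$.

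Two asymptotic estimates close the argument. Because $W$ is nondegenerate with mean zero, $\Prob[W>a]>0$ and $\Prob[W<-a]>0$ for some $a>0$, hence the Markov-type bounds $\Psi(\phi)\ge e^{a\phi}\Prob[W>a]$ and $\Psi(-\phi)\ge e^{a\phi}\Prob[W<-a]$ valid for $\phi\ge 0$ give $M(\phi)\ge c_{1}e^{a\phi}$; thus $M(\theta_{0}\eta^{n})$ blows up \emph{doubly exponentially} in $n$ (and equals $+\infty$ outright once $\theta_{0}\eta^{n}$ leaves $\bbD_{\Psi}$, which happens for large $n$ when $\bbD_{\Psi}$ is bounded). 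On the other hand, by Lemma~\ref{lem:mgfconv}(a) one has $\Psi_{n-1}(-\theta_{0})\le\Psi(-\theta_{0})<\infty$, so Chernoff yields $\Prob[W_{n-1}<-K]\le\Psi(-\theta_{0})e^{-\theta_{0}K}$. Choosing $K_{n}:=\theta_{0}^{-1}\log(2\Psi(-\theta_{0})q^{-n})=O(n)$ makes this bound $\le q^{n}/2$, so $\Prob[A_{n}\cap\{W_{n-1}\ge-K_{n}\}]\ge q^{n}/2$ and therefore $h_{n}\ge e^{-\theta_{0}K_{n}}q^{n}/2\ge c_{2}q^{2n}$. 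Combining both estimates, $\Psi(\theta_{0})\ge c_{3}\exp(a\theta_{0}\eta^{n}+2n\log q)\to\infty$ as $n\to\infty$, the desired contradiction.

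The main obstacle is the mixed quantity $h_{n}$: the spine event $A_{n}$ has only probability $q^{n}$, while on it the partial sum $W_{n-1}$ may in principle make arbitrarily large negative excursions. The two-sided exponential moment inherited by every $W_{n}$ from the contradiction hypothesis, via the Doob martingale property noted just after Lemma~\ref{lem:mgfconv}, is precisely what supplies the Chernoff control that keeps $h_{n}$ only geometrically small in $n$ and hence overwhelmed by the double-exponential growth of $M(\theta_{0}\eta^{n})$.
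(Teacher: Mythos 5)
Your proof is correct, but it takes a genuinely different route from the paper's. The paper isolates the extreme weights, rewrites the fixed point as solutions of the random difference equations $W\eqdist T_{1}W_{1}+B_{1}$ and $W\eqdist T_{N}W_{N}+B_{N}$ with $B_{1}:=\sum_{k=2}^{N}T_{k}W_{k}+C$, $B_{N}:=\sum_{k=1}^{N-1}T_{k}W_{k}+C$, and then invokes Theorem 4.1 of Goldie and Gr\"ubel \cite{GolGru:96} to obtain $\liminf_{t\to\infty}\log\Prob[|W|>t]/\log t>-\infty$, i.e.\ a polynomial lower bound on the tail of $|W|$, which is even stronger than the stated conclusion. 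You instead argue self-containedly: the $n$-fold iteration \eqref{eq:iterates of SFPE} with $X=W$, conditioned on the first $n$ generations, gives $\Psi(\theta_{0})=\Erw\bigl[e^{\theta_{0}W_{n-1}}\prod_{|v|=n}\Psi(\theta_{0}L(v))\bigr]$ as an identity in $[0,\infty]$, and since $\Psi\ge 1$ (Jensen, $\Erw W=0$) you may keep only the factor along your adaptive spine; the doubly exponential growth of $M(\theta_{0}\eta^{n})\ge c_{1}e^{a\theta_{0}\eta^{n}}$ then overwhelms the geometric bound $h_{n}\ge c_{2}q^{2n}$, which you obtain by a Chernoff estimate that uses exactly the assumed finiteness of $\Psi(-\theta_{0})$ together with Lemma \ref{lem:mgfconv}(a). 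This correctly exploits only the two-sided hypothesis, in line with cases (a3)/(a4) of Theorem \ref{thm:startingpoint}, where one-sided exponential moments do exist, so nothing stronger could be concluded. Two small polish points: state why $W$ is nondegenerate (if $W\equiv 0$, the SFPE forces $C\equiv 0$, excluded by the standing assumptions), and drop or amend the parenthetical claim that $M(\theta_{0}\eta^{n})=+\infty$ as soon as $\theta_{0}\eta^{n}$ leaves $\bbD_{\Psi}$ — since $M$ is a minimum of two values this would require both $\pm\theta_{0}\eta^{n}$ to lie outside $\bbD_{\Psi}$; your exponential lower bound on $M$ is all that is needed anyway. In sum, the paper's proof is shorter and delivers a polynomial tail lower bound as a by-product, while yours is elementary, avoids the external perpetuity result, and stays entirely within the weighted branching framework already set up in Section \ref{sec:preliminaries}.
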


\begin{proof}
Since the $T_{1}\ge\ldots\ge T_{N}$, we have $\Prob(T_{1}>1)>0$ or $\Prob(T_{N}<-1)>0$.
Putting $B_{1}:=\sum_{k=2}^{N}T_{k}W_{k}+C$ and $B_{N}:=\sum_{k=1}^{N-1}T_{k}W_{k}+C$, observe that $W$ satisfies the random difference equations
$$ W\ \eqdist\ T_{1}W_{1}+B_{1}\quad\text{and}\quad W\ \eqdist\ T_{N}W_{N}+B_{N} $$
with $(B_{1},T_{1})$, $(B_{N},T_{N})$ being independent of $W_{1}$ and $W_{N}$, respectively.
Now use Theorem 4.1 in \cite{GolGru:96} to infer that
$$ \liminf_{t\to\infty}\frac{\log\Prob[|W|>t]}{\log t}\ >\ -\infty $$
and thus in particular $\Erw e^{\theta|W|}=\infty$ for all $\theta\in\R$.\qed
\end{proof}

Having shown that $\bbD_{\Psi}$ cannot contain an open neighborhood of 0 if \eqref{eq:leq1} fails, Theorem \ref{thm:startingpoint} contains more detailed information for this situation and particularly reveals that $\bbD_{\Psi}$ differs for the cases when the $T_{k}$ are $[-1,1]$-valued or $[0,1]$-valued, more precisely, if
\begin{equation}
\beta\ :=\ \|T_{N}^{-}\|_{\infty}\ =\ \esssup T_{N}^{-}.
\end{equation}
is 0, thus $\Prob[T_{k}>0,k\le N]=1$, or $>0$, thus $\Prob[T_{k}>0,k\le N]<1$,
where $T_{N}=\min_{1\le k\le N}T_{k}$ should be recalled.

\begin{Theorem}\label{thm:startingpoint}
Suppose that \eqref{eq:standing} holds for some $p\in [1,2]$. 
\begin{description}[(b)]
\item[(a)] If $\bbD_{\Psi}\ne\{0\}$, then one of the following four cases occurs:
\end{description}
\begin{description}[(b)]\itemsep2pt
\item[\quad(a1)] \eqref{eq:leq1} holds, $\beta=0$, and $\bbD_{\vph}\ne\{0\}$.
\item[\quad(a2)] \eqref{eq:leq1} holds, $\beta>0$, and $\bbD_{\vph}\supset (-\eps,\eps)$ for some $\eps>0$.
\item[\quad(a3)] $\Prob[T_{1}>1]>0$, $\beta=0$, and $\Prob\left[w^{*}\Sigma_{1}+C\le w^{*}\right]=1$ for some $w^{*}\ge 0$.
\item[\quad(a4)] $\Prob[T_{1}>1]>0$, $\beta=0$, and $\Prob\left[w_{*}\Sigma_{1}+C\ge w_{*}\right]=1$ for some $w_{*}\le 0$.
\end{description}
\begin{description}[(b)]\itemsep3pt
\item[] Moreover, $\bbD_{\Psi}\supset (-\eps,\eps)\cap\bbD_{\vph}$ for some $\eps>0$ in the first two cases, while $\|C^{+}\|_{\infty}\le\|W^{+}\|_{\infty}<\infty$, $\bbD_{\Psi}=\R_{\geqslant}$ must hold in case (a3) and $\|C^{-}\|_{\infty}\le\|W^{-}\|_{\infty}<\infty$, $\bbD_{\Psi}=\R_{\leqslant}$ must hold in case (a4). 
\item[(b)] Conversely, (a3) and (a4) imply $\bbD_{\Psi}\ne\{0\}$, and this is also true for (a1) and (a2) under the additional assumption
\begin{equation}\label{eq:additional for (a1) and (a2)}
\|C\|_{2} <\infty\quad\text{and}\quad\Erw e^{\theta\Sigma_{2}}\ <\ \infty\quad\text{for some }\theta>0.
\end{equation}
\end{description}
\end{Theorem}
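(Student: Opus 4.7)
The plan is to handle parts (a) and (b) by different methods. Part (a) follows from the functional equation \eqref{eq:funsfpe} together with Proposition \ref{prop:golgru}, while part (b) breaks into an easy induction in cases (a3)/(a4) and a delicate bootstrapping argument in (a1)/(a2).

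For part (a), I would pick $\theta_0 \in \bbD_{\Psi}\setminus\{0\}$ and, after replacing $(W,C)$ by $(-W,-C)$ if needed, assume $\theta_0 > 0$. The integrability of $e^{\theta_0 C}\prod_{k}\Psi(T_k\theta_0)$ together with $\Psi\ge 1$ (from $\Erw W = 0$ and Jensen) immediately gives $\vph(\theta_0)<\infty$, hence $\bbD_{\vph}\ne\{0\}$; it also forces $T_k\theta_0\in\bbD_{\Psi}$ a.s., so the interval $\bbD_{\Psi}$ must contain the essential range of every $T_k\theta_0$. If \eqref{eq:leq1} holds, then either $\beta=0$ (case (a1)) or, when $\beta>0$, the essential infimum $-\beta\theta_0$ of $T_N\theta_0$ forces $\bbD_{\Psi}$ to contain a left-neighborhood of $0$, yielding (a2). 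If \eqref{eq:leq1} fails, Proposition \ref{prop:golgru} gives $\bbD_{\Psi}\subset\R_{\geqslant}$, so the support argument yields $T_k\ge 0$ a.s.\ and $\Prob[T_1>1]>0$. To establish $w^*:=\|W^+\|_\infty<\infty$, I would use \eqref{eq:funsfpe} on the event $A:=\{T_1>1+\delta,\,|C|\le M\}$ of positive probability to obtain $\Psi(\theta)\ge\Prob(A)\,e^{-M\theta}\Psi((1+\delta)\theta)$, then iterate $n$ times and combine with the Markov lower bound $\Psi(\lambda)\ge e^{\lambda a}\Prob[W\ge a]$ to force $w^*\le M/\delta$. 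Consequently $\bbD_{\Psi}=\R_{\geqslant}$, and the identity $w^*\Sigma_1+C\le w^*$ a.s.\ follows by computing the conditional essential supremum over the iid copies $(W_k)$ of $\sum_k T_kW_k+C\eqdist W$, which under $T_k\ge 0$ equals $w^*\Sigma_1+C$. This produces (a3), and (a4) is symmetric.

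For part (b), the implication (a3)$\Rightarrow \bbD_{\Psi}=\R_{\geqslant}$ is a clean induction: $W_0 = C\le w^*(1-\Sigma_1)\le w^*$ a.s., and if $W_{n-1}\le w^*$ a.s., then \eqref{eq:backward Wn} applied to iid copies gives $W_n\le w^*\Sigma_1 + C\le w^*$; passing to the a.s.\ limit yields $W\le w^*$ and hence $\bbD_{\Psi}\supset\R_{\geqslant}$. Case (a4) is symmetric.

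The remaining implication---(a1) or (a2) plus \eqref{eq:additional for (a1) and (a2)} $\Rightarrow \bbD_{\Psi}\ne\{0\}$---is the technical heart of the theorem, and I plan to attack it by a bootstrapping argument on the iterates $\Psi_n$. The ansatz would be $\Psi_n(\theta) \le 1 + A_n\theta^2$ for $|\theta|\le\theta_1$ (or only for $0\le\theta\le\theta_1$ in case (a1)) with a sequence $A_n$ to be controlled. Using \eqref{eq:backward Wn} and $|T_k\theta|\le|\theta|\le\theta_1$,
\[
\Psi_n(\theta) \le \Erw\bigl[e^{\theta C}\,e^{A_{n-1}\theta^2\Sigma_2}\bigr] \le \sqrt{\Erw e^{2\theta C}}\,\sqrt{\Erw e^{2A_{n-1}\theta^2\Sigma_2}}
\]
by Cauchy--Schwarz; the first factor is finite for small $\theta$ by $\bbD_{\vph}\ne\{0\}$, and the second by \eqref{eq:additional for (a1) and (a2)} as long as $A_{n-1}$ stays bounded. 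Taylor expansion using $\Erw C=0$, $\Erw C^2<\infty$ and $\Erw\Sigma_2<1$ then produces a recursion $A_n\le \Erw C^2 + A_{n-1}\Erw\Sigma_2 + O(\theta)$ whose fixed point is finite, so $(A_n)$ stabilizes uniformly in $n$. Hence $\sup_n\Psi_n(\theta)<\infty$ on $|\theta|\le\theta_1$, Lemma \ref{lem:mgfconv}(b) delivers $\Psi(\theta)<\infty$, and shrinking $\theta_1$ into $\bbD_{\vph}$ yields the refinement $\bbD_{\Psi}\supset(-\eps,\eps)\cap\bbD_{\vph}$. The main obstacle I expect is closing this recursion uniformly in $n$---in particular, handling the $O(\theta)$ corrections and the potentially one-sided $\bbD_{\vph}$ of case (a1) using only the stated moment hypotheses, without any a priori exponential moment of $C$.
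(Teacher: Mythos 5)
Your proposal is correct and its overall architecture (the functional equation \eqref{eq:funsfpe} combined with Proposition \ref{prop:golgru} for part (a); an inductive/supersolution bound for part (b)) matches the paper, but one key step is done by a genuinely different argument. In the hard case where \eqref{eq:leq1} fails, the paper proves $\|W^{+}\|_{\infty}<\infty$ in two stages: it first shows $\Prob[C\le 0\,|\,T_{1}>1]=1$ (by letting $\theta\to\infty$ in $\Erw[e^{\theta C}\1_{\{T_{1}>1\}}]\le 1$) and then derives a contradiction from the strict convexity of $\log\Psi$, whose derivative would tend to $\infty$ if $W^{+}$ were unbounded. Your truncation argument on $A=\{T_{1}>1+\delta,\,|C|\le M\}$, iterating $\Psi(\theta)\ge \Prob(A)e^{-M\theta}\Psi((1+\delta)\theta)$ and playing the doubly exponential Chernoff lower bound $e^{(1+\delta)^{n}\theta a}\Prob[W\ge a]$ (for $a>M/\delta$) against the single-exponential factor $\Prob(A)^{n}$, reaches the same conclusion more directly, bypasses the sign claim for $C$ on $\{T_{1}>1\}$ entirely, and even yields the quantitative bound $w^{*}\le M/\delta$; it is legitimate because $\Psi\ge 1$ and $\Psi$ is nondecreasing on $\R_{\geqslant}$ (both from $\Erw W=0$ and convexity), so the inequality holds in $[0,\infty]$. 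For part (b) under \eqref{eq:additional for (a1) and (a2)}, your iterative ansatz $\Psi_{n}(\theta)\le 1+A_{n}\theta^{2}$ is the same computation as the paper's, phrased dynamically: the paper fixes the constant first, choosing $b$ so that $\Erw C^{2}+2b(\Erw\Sigma_{2}-1)<0$, verifies via a third-order Taylor expansion (remainder dominated thanks to $\|C\|_{2}<\infty$, $\Erw e^{\theta\Sigma_{2}}<\infty$ and H\"older, i.e. your Cauchy--Schwarz step) that $\Erw e^{\theta C+b\theta^{2}(\Sigma_{2}-1)}\le 1$ on a small interval $I\subset\bbD_{\vph}$, and then invokes Lemma \ref{prop:oversol} with $\Phi(\theta)=e^{b\theta^{2}}$. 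This is exactly the standard resolution of the obstacle you flag: rather than closing the recursion $A_{n}\le\tfrac12\Erw C^{2}+A_{n-1}\Erw\Sigma_{2}+O(\theta)$ uniformly in $n$, fix $A_{\max}$ with $\tfrac12\Erw C^{2}+A_{\max}\Erw\Sigma_{2}<A_{\max}$ first and only then shrink $\theta_{1}$ (depending on $A_{\max}$), so your recursion does close. Minor bookkeeping remains in your sketch: shrink $\eps$ so that $2\theta\in\bbD_{\vph}$ for all $\theta\in(-\eps,\eps)\cap\bbD_{\vph}$ in the Cauchy--Schwarz step, and in case (a1) cover negative $\theta\in\bbD_{\vph}$ by the same bound (or by symmetry) so that the refinement $\bbD_{\Psi}\supset(-\eps,\eps)\cap\bbD_{\vph}$ holds for both signs; also note that $\|C^{+}\|_{\infty}\le w^{*}$ follows at once from $w^{*}\Sigma_{1}+C\le w^{*}$ and $\Sigma_{1}\ge 0$. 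The remaining pieces --- cases (a1)/(a2) of part (a), the essential-supremum identity, and the induction for (a3)/(a4) --- coincide with the paper's proof.
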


\begin{Rem}\label{rem:startingpoint}\rm
The reader should notice that the additional assumption \eqref{eq:additional for (a1) and (a2)} particularly holds if $\|\Sigma_{2}\|_{\infty}<\infty$ or, a fortiori, if \eqref{eq:leq1} and $\|N\|_{\infty}<\infty$ hold. Note further that $W$ has no nontrivial exponential moments whenever $\beta>1$, i.e. $\Prob[T_{N}<-1]>0$.
\end{Rem}

In view of the previous result we will focus hereafter on the situation when \eqref{eq:leq1} holds. As it turns out, $\bbD_{\Psi}$ depends on the behavior of the family
\begin{equation}\label{eq:defndelta}
N_{\delta}\ :=\ \sum_{k=1}^{N}\left(\1_{\{T_{k}>1-\delta\}} + \1_{\{T_{k} <- (1-\delta)\beta \}}\right),\quad\delta\in (0,1).
\end{equation}
Observe that $N_{\delta}$ increases with $\delta$ and
$$ \lim_{\delta\to 0}N_{\delta}\ =\ \sum_{k=1}^{N}\left(\1_{\{T_{k}=1\}}+\1_{\{T_{k} =-\beta \}}\right)\ =:\ \Sigma_{\infty}^{1,-\beta}\quad\text{a.s.} $$
This convergence does not need to be uniform and in general we have
$$ \cN\ :=\ \lim_{\delta\to 0}\|N_{\delta}\|_{\infty}\ \ge\ \|\Sigma_{\infty}^{1,-\beta}\|_{\infty}. $$

The simplest situation occurs when $\cN\le 1$ and is treated in the next two theorems which, in essence, cover those cases where only $T_{1}$ or $|T_{N}|$ can be large in the sense that only one of these entries is allowed to take values arbitrarily close to $\|\max_{1\le k\le N}T_{k}\|_{\infty}$. Theorem \ref{thm:exppos} assumes nonnegative $T_{k}$ $(\beta=0)$ and provides an explicit description of $\bbD_{\Psi}$, while Theorem \ref{thm:expint} deals with the case when $\Prob[T_{N}<0]>0$ $(\beta>0)$. As indicated by Theorem \ref{thm:startingpoint}, this condition causes some asymmetry regarding $\bbD_{\Psi}$ which is encoded in $\beta$. The proofs will be based on the construction of a certain super-solution (a~technique commonly used in the theory of partial differential equations) of the functional equation \eqref{eq:funsfpe} (see Lemmata \ref{prop:oversol}, \ref{lem:lem1} and \ref{lem:lem2}). 

For $\theta\in\R$, we define
\begin{equation}\label{eq:defab}
a(\theta)\,:=\,\Erw e^{\theta C}\1_{\{T_{1}=1\}}\quad\text{and}\quad
b(\theta)\,:=\,\Erw e^{\theta C}\1_{\{T_N=-\beta\}}.
\end{equation}
Also, let $\interior(A)$ denote the interior of the set $A\subset\R$. 


\begin{Theorem}\label{thm:exppos}
Suppose \eqref{eq:standing} for some $p\in [1,2]$, \eqref{eq:leq1}, $\beta=0$ and $\|\Sigma_{2}\|_{\infty}<\infty$. Suppose further that
\begin{equation}\label{eq:big1pos}
\left\|\max_{2\le k\le N}T_{k}\right\|_{\infty}<\ 1\quad\text{and}\quad\Prob[T_{1}=1,N\ge 2]\,=\,0.
\end{equation}
Then
$$ \bbD_{\Psi}\ =\ \bbD_{\vph}\cap\left\{\theta\in\R:a(\theta)<1\right\}. $$
\end{Theorem}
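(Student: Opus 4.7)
I would treat the two inclusions in the claimed equality separately.

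\textbf{Easy direction} $\bbD_{\Psi}\subseteq\bbD_{\vph}\cap\{\theta:a(\theta)<1\}$. The containment $\bbD_{\Psi}\subseteq\bbD_{\vph}$ is immediate from Lemma~\ref{lem:mgfconv}(a). For the constraint $a(\theta)<1$, I would fix $\theta\in\bbD_{\Psi}$, apply the functional equation~\eqref{eq:funsfpe}, and split the right-hand side according to whether $T_{1}=1$ or $T_{1}<1$. The hypothesis $\Prob[T_{1}=1,N\ge2]=0$ forces $N=1$ whenever $T_{1}=1$, so on that event $\prod_{k=1}^{N}\Psi(T_{k}\theta)=\Psi(\theta)$ and the corresponding integral is exactly $a(\theta)\Psi(\theta)$. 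Rearranging yields
\[
\Psi(\theta)\bigl(1-a(\theta)\bigr)\;=\;\Erw\!\left[e^{\theta C}\1_{\{T_{1}<1\}}\prod_{k=1}^{N}\Psi(T_{k}\theta)\right]\;\ge\;0,
\]
so $a(\theta)\le1$. Equality would force the integrand to vanish a.s.; but $\Psi(T_{k}\theta)>0$ and $e^{\theta C}>0$, so this leaves only $\Prob[T_{1}<1]=0$, whence $T_{1}=1$ and (by the hypothesis again) $N=1$ a.s., so $\Sigma_{p}\equiv1$ and $\|\Sigma_{p}\|_{1}=1$, contradicting~\eqref{eq:standing}.

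\textbf{Hard direction} $\bbD_{\vph}\cap\{\theta:a(\theta)<1\}\subseteq\bbD_{\Psi}$. By the symmetry $C\leftrightarrow-C$ it suffices to fix $\theta^{*}>0$ with $\vph(\theta^{*})<\infty$ and $a(\theta^{*})<1$ and to show $\Psi(\theta^{*})<\infty$. My plan is to construct a \emph{supersolution} $\Phi:[0,\theta^{*}]\to[1,\infty)$ to the functional equation, that is, a monotone function with $\Phi\ge\vph$ on $[0,\theta^{*}]$ and
\[
\Phi(\theta)\;\ge\;\Erw\!\left[e^{\theta C}\prod_{k=1}^{N}\Phi(T_{k}\theta)\right]\quad\text{for all }\theta\in[0,\theta^{*}].
\]
Given such a $\Phi$, a straightforward induction on $n$ via the backward equation~\eqref{eq:backward Wn}---starting from $\Psi_{0}=\vph\le\Phi$ and using $T_{k}\theta\in[0,\theta^{*}]$ together with the monotonicity in $F$ of the operator $F\mapsto\Erw[e^{\theta C}\prod_{k}F(T_{k}\theta)]$---will give $\Psi_{n}\le\Phi$ on $[0,\theta^{*}]$ for every $n$. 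Passing to the limit via Lemma~\ref{lem:mgfconv}(b) then yields $\Psi(\theta^{*})\le\Phi(\theta^{*})<\infty$. This reduction is the content of Lemma~\ref{prop:oversol}.

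The main obstacle will be the construction of $\Phi$ itself, which is the role of Lemmata~\ref{lem:lem1} and~\ref{lem:lem2}. I would exploit three structural features: first, the strict slack $1-a(\theta^{*})>0$, which must absorb the contribution of $\{T_{1}=1\}$---appearing in the supersolution inequality as an $a(\theta)\Phi(\theta)$ term that has to be moved to the left-hand side; second, the bounds $\|\max_{k\ge2}T_{k}\|_{\infty}<1$ and $\|\Sigma_{2}\|_{\infty}<\infty$, which via the log-convexity estimate $\vph(T_{k}\theta)\le\vph(\theta)^{T_{k}}$ and $\prod(1+x_{k})\le\exp(\sum x_{k})$ let me control the $\{T_{1}<1\}$-contribution by expressions of the form $\vph(\theta)^{\Sigma_{1}}\exp\bigl(O(\theta^{2}\Sigma_{2})\bigr)$; and third, the monotonicity of $\vph$ (and of any candidate $\Phi$) on $[0,\theta^{*}]$, inherited from $\Erw C=0$. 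The genuinely delicate point is that $T_{1}$ can still be arbitrarily close to $1$ even on the event $\{T_{1}<1\}$, so no naive contraction-type estimate is available there; the balance has to be struck by routing that contribution through the $1-a(\theta)$ slack and verifying the resulting quadratic-in-$\theta$ inequality for a suitable choice of $\Phi$.
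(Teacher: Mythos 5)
Your first inclusion is correct and is essentially the paper's argument (it is the first half of Lemma~\ref{lem:lem2}): split \eqref{eq:funsfpe} on $\{T_{1}=1\}$ versus $\{T_{1}<1\}$, use $\Prob[T_{1}=1,N\ge 2]=0$ to identify the first contribution as $a(\theta)\Psi(\theta)$, and rule out $a(\theta)=1$ via \eqref{eq:standing}. The reduction of the converse inclusion to the existence of a super-solution is also the right frame and coincides with Lemma~\ref{prop:oversol}.

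The genuine gap is in the hard direction: you never construct the super-solution, and the ingredients you propose would not produce one. A single explicit $\Phi$ on all of $[0,\theta^{*}]$ built from $\vph$, the bound $\vph(T_{k}\theta)\le\vph(\theta)^{T_{k}}$ and a factor $\exp(O(\theta^{2}\Sigma_{2}))$ cannot be expected to satisfy \eqref{eq:oversol} for an arbitrary $\theta^{*}$ deep inside $\bbD_{\vph}\cap\{a<1\}$: the quantity $\Sigma_{1}$ need not be $<1$ (e.g.\ $\Sigma_{1}\equiv 1$ for Quicksort), so $\vph(\theta)^{\Sigma_{1}}e^{O(\theta^{2}\Sigma_{2})}$ gives no contraction, and the event $\{1-\delta<T_{1}<1\}$, which you correctly flag as the delicate point, is left unhandled --- the slack $1-a(\theta)>0$ only controls the event $\{T_{1}=1\}$ exactly, not its $\delta$-neighbourhood, and passing from $a(\theta)$ to $a_{\delta}$-type quantities requires quantitative control of $\Psi$ itself, which is what you are trying to prove. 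The paper resolves this not by a global construction but by a continuation (bootstrapping) argument inside Lemma~\ref{lem:lem2}: assuming the inclusion fails, it takes $\theta_{0}$ the infimum of the gap, chooses $\theta_{*}<\theta_{0}\le\theta^{*}$ with $(1-\delta)\theta^{*}<\theta_{*}$, and builds a piecewise super-solution equal to $\Psi$ on $[0,\theta_{*}]$ (already known finite there, with $\Psi(\theta)\le e^{\kappa\theta^{2}}$ on that range) and equal to a large constant $\xi$ on $(\theta_{*},\theta^{*}]$. The hypothesis $\|\max_{2\le k\le N}T_{k}\|_{\infty}<1$ is used precisely so that on $\{T_{1}>1-\delta\}$ all other arguments $T_{k}\theta$ fall back into $[0,\theta_{*}]$ where $\Phi=\Psi$ is controlled; the resulting coefficient $a_{\delta}(\theta)=\Erw[e^{\theta C}\prod_{k\ge 2}\Psi(T_{k}\theta)\1_{\{T_{1}>1-\delta\}}]$ converges to $a(\theta)<1$ by dominated convergence (this is where $\|\Sigma_{2}\|_{\infty}<\infty$ enters), and then $\xi$ can be chosen large enough to absorb the bounded remainder $c_{\delta}(\theta)$ coming from $\{\max_{k}T_{k}\le 1-\delta\}$. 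Lemma~\ref{prop:oversol} then yields $\theta^{*}\in\bbD_{\Psi}$, contradicting the definition of $\theta_{0}$. Without this (or an equivalent mechanism for exploiting the already-established part of $\bbD_{\Psi}$ and the structural condition \eqref{eq:big1pos}), your outline does not close the argument.
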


Notice that \eqref{eq:big1pos} and $\beta=0$ entail $N_{\delta}\le 1$ a.s. for all sufficiently small $\delta>0$ and therefore indeed $\cN\le 1$. To ensure the latter when $\beta>0$, a more complicated version of \eqref{eq:big1pos} must be imposed and appears as \eqref{eq:C1}-\eqref{eq:C3} in the subsequent result.

\begin{Theorem}\label{thm:expint}
Suppose \eqref{eq:standing} for some $p\in [1,2]$, \eqref{eq:leq1}, $\beta>0$ and $\|\Sigma_{2}\|_{\infty}<\infty$. Suppose further that, for some $\delta\in (0,1)$,
\begin{align}
\Prob[-\beta(1-\delta)\le T_{k}\le 1-\delta,\,2\le k\le N-1]\,=\,1.\label{eq:C1}\\[.5mm]
\Prob[T_{1}>1-\delta,\,T_{N}<-\beta(1-\delta)]\,=\,0.\label{eq:C2}\\[.5mm]
\Prob[T_{1}=1,N\ge 2]\,=\,\Prob[T_{N}=-\beta,N\ge 2]\,=\,0.\label{eq:C3}
\end{align}
Then the following assertions hold true:
\begin{description}[(b)]\itemsep2pt
\item[(a)] If $\Prob[T_{N}=-\beta]>0$ and $\beta<1$, then
$$ \bbD_{\Psi}\ =\ \left\{\theta:a(-\beta\theta)\vee a(\theta)<1\text{ and }-\beta\theta,\theta\in\bbD_{\vph}\right\}. $$
\item[(b)] If $\Prob[T_{N}=-1]>0$, then
$$ \bbD_{\Psi}\ =\ \left\{\theta:(1-a(-\theta))(1-a(\theta))>b(\theta)b(-\theta)\text{ and }-\theta,\theta\in\bbD_{\vph}\right\}. $$
\item[(c)] If $\Prob[T_{N}=-\beta]=0$, then
$$ \interior(\bbD_{\Psi})\ \subset\ \left\{\theta:a(-\beta\theta)\vee a(\theta)<1\text{ and }-\beta\theta,\theta\in\bbD_{\vph}\right\}\ \subset\ \bbD_{\Psi}. $$
\end{description}
\end{Theorem}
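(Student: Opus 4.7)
My plan is to use the functional equation \eqref{eq:funsfpe} together with the structural hypotheses \eqref{eq:C1}--\eqref{eq:C3}, which force the events $\{T_1=1\}$ and $\{T_N=-\beta\}$ to be contained in $\{N=1\}$ and (since $\beta>0$) to be disjoint. This yields the central decomposition
$$
\Psi(\theta)\ =\ a(\theta)\Psi(\theta)\ +\ b(\theta)\Psi(-\beta\theta)\ +\ R(\theta),
$$
where $R(\theta):=\Erw\bigl[e^{\theta C}\prod_{k=1}^N\Psi(T_k\theta)\1_{\{T_1<1,\,T_N>-\beta\}}\bigr]$. On the event defining $R(\theta)$, conditions \eqref{eq:C1}--\eqref{eq:C2} guarantee that at most one of $T_1,\,|T_N|$ lies within $\delta$ of $1$ resp.\ $\beta$, all other $T_k$ lying in $[-\beta(1-\delta),1-\delta]$; this keeps every argument $T_k\theta$ strictly inside the segment between $-\beta\theta$ and $\theta$ and will drive the quantitative bounds on $R$.

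For the necessity direction, assume $\theta\in\bbD_\Psi$. Nonnegativity of the auxiliary terms in the decomposition gives $a(\theta)\le 1$, and $a(\theta)=1$ is excluded by a short case analysis: it would force $b(\theta)\Psi(-\beta\theta)=0$ and $R(\theta)=0$, which under \eqref{eq:C3} and the nondegeneracy of $C$ (from our standing assumptions) leads to a contradiction. Applying the same reasoning at $-\beta\theta$ yields $a(-\beta\theta)<1$, while $\theta,\,-\beta\theta\in\bbD_\vph$ follows from $\Psi\ge\vph$ (Lemma~\ref{lem:mgfconv}(a)). This settles necessity in (a) and one half of (c). In case~(b), $\beta=1$ makes the equations at $\pm\theta$ couple into the linear system
$$
(1-a(\theta))\Psi(\theta)-b(\theta)\Psi(-\theta)=R(\theta),\quad -b(-\theta)\Psi(\theta)+(1-a(-\theta))\Psi(-\theta)=R(-\theta),
$$
and positivity of the (finite) solution $(\Psi(\theta),\Psi(-\theta))$ together with nonnegativity of $R(\pm\theta)$ and $b(\pm\theta)$ forces the determinant $(1-a(\theta))(1-a(-\theta))-b(\theta)b(-\theta)$ to be strictly positive.

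For sufficiency I will construct, for each $\theta$ in the claimed set, a positive super-solution $\wh\Psi$ of \eqref{eq:funsfpe}, i.e.\ $\wh\Psi\ge\Erw[e^{\theta C}\prod_k\wh\Psi(T_k\theta)]$ with $\wh\Psi\ge\vph=\Psi_0$, following the pattern of Lemmata~\ref{prop:oversol}, \ref{lem:lem1}, \ref{lem:lem2}. Once such $\wh\Psi$ exists, a straightforward induction on $n$ using the backward equation \eqref{eq:backward Wn} gives $\Psi_n\le\wh\Psi$, and Lemma~\ref{lem:mgfconv}(b) concludes $\Psi(\theta)\le\wh\Psi(\theta)<\infty$. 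In case~(a) with $\beta<1$, the idea is iterative along the orbit $\theta\to-\beta\theta\to\beta^2\theta\to\ldots$, which shrinks to $0$; convexity of $a$ propagates $a(\theta)<1$ and $a(-\beta\theta)<1$ along the whole orbit, while $\|\Sigma_2\|_\infty<\infty$ and a Taylor expansion $\log\Psi(s)=O(s^2)$ near $0$ allow an explicit choice such as $\wh\Psi(s)=K\exp(\mu s^2)$ on a suitable bounded interval, with $K,\mu$ tuned so that the $R$-contribution is absorbed. Case~(c) is the degenerate instance $b\equiv 0$ of this construction. In case~(b) no contraction is available, and the super-solution must be designed so that the pair $(\wh\Psi(\theta),\wh\Psi(-\theta))$ respects the $2\times 2$ linear upper bound whose coefficient matrix is exactly the one from the necessity step, solvable precisely under the determinant condition.

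The main obstacle will be this last step in case~(b): manufacturing a genuine super-solution that is simultaneously compatible at $\theta$ and $-\theta$. Formally the determinant condition already furnishes positive Cramer-rule coefficients, but transferring this algebraic fact into a pointwise functional inequality valid for all admissible configurations of $(T_1,\ldots,T_N)$ is where \eqref{eq:C1}--\eqref{eq:C2} must be exploited most sharply, because every ``interior'' $T_k$ contributes a $\Psi(T_k\theta)$ whose values at $\pm\theta$ are not controlled by the system. A secondary technical point is the boundary treatment in case~(c): at a boundary point of the candidate set one may have either $a(\theta)=1$ or $-\beta\theta\in\partial\bbD_\vph$, and neither side of the inclusion can be strengthened without further information, which is exactly why only $\interior(\bbD_\Psi)$ is asserted there.
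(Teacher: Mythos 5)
Your necessity argument is essentially the paper's: the decomposition of \eqref{eq:funsfpe} into $a(\theta)\Psi(\theta)+b(\theta)\Psi(-\beta\theta)+R(\theta)$ with $R(\theta)>0$ (positivity follows from \eqref{eq:standing} and \eqref{eq:C3}) is exactly how Lemmata \ref{lem:lem1} and \ref{lem:lem2} obtain $a(\theta)<1$, resp.\ the determinant inequality when $\beta=1$. One point you gloss over: to ``apply the same reasoning at $-\beta\theta$'' (and to get $-\beta\theta\in\bbD_{\vph}$ via Lemma \ref{lem:mgfconv}) you first need $-\beta\theta\in\bbD_{\Psi}$, which in case (a) comes from $\Psi(\theta)\ge b(\theta)\Psi(-\beta\theta)$ with $b(\theta)>0$, i.e.\ from the hypothesis $\Prob[T_{N}=-\beta]>0$ that your write-up never invokes; in case (c) the analogous step only works for $\theta\in\interior(\bbD_{\Psi})$, which is the actual reason the theorem asserts only the interior inclusion there.

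The genuine gap is the sufficiency direction. Your proposed super-solution $\wh\Psi(s)=K\exp(\mu s^{2})$ on a bounded interval cannot work in general, no matter how $K\ge 1$ and $\mu$ are tuned. Indeed, \eqref{eq:oversol} at $s$ would require $\Erw\bigl[e^{sC}K^{N-1}e^{\mu s^{2}(\Sigma_{2}-1)}\bigr]\le 1$; but the hypotheses of the theorem allow, e.g., an atom $(C,T_{1},T_{2})=(c_{0},1/\sqrt{2},1/\sqrt{2})$ with $c_{0}>0$ and probability $q>0$ (compatible with \eqref{eq:C1}--\eqref{eq:C3} and $\Erw\Sigma_{2}<1$), on which $\Sigma_{2}=1$, so the left-hand side is at least $q\,e^{sc_{0}}>1$ once $s>c_{0}^{-1}\log(1/q)$ --- while the theorem asserts these $s$ do lie in $\bbD_{\Psi}$. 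The same problem occurs whenever $\Prob[\Sigma_{2}\ge 1,\,C>0]>0$; the Gaussian ansatz (and the orbit iteration $\theta\to-\beta\theta\to\beta^{2}\theta\to\ldots$) is too rigid, because the equation at $\theta$ involves $\Psi(T_{1}\theta)$ for $T_{1}$ arbitrarily close to, but different from, $1$, not just the discrete orbit, and $\Psi$ genuinely grows super-Gaussianly near the endpoint. The paper's mechanism is different and is the heart of the proof: assuming the claimed set is not contained in $\bbD_{\Psi}$, set $\theta_{0}:=\inf\R_{>}\cap(\bbD\setminus\wh\bbD_{\Psi})$, show via dominated convergence (using $\|\Sigma_{2}\|_{\infty}<\infty$ and the quadratic bound on $\log\Psi$ inside $\interior(\bbD_{\Psi})$) that the smoothed quantities $a_{\delta},b_{\delta}$ converge to $a,b$, and then build a \emph{piecewise} super-solution equal to $\Psi$ itself on the middle interval $[-\beta\theta_{*},\theta_{*}]$ and constant ($\xi$, resp.\ $\xi\eta$) on the outer pieces; conditions \eqref{eq:C1}--\eqref{eq:C2} are used precisely to guarantee that whenever one weight is within $\delta$ of an extreme, all other $T_{k}\theta$ land in the middle region, so that the constant pieces only have to absorb $a_{\delta},b_{\delta},c_{\delta}$, which the choice of $\xi$ accomplishes and yields the contradiction $\theta^{*}\in\wh\bbD_{\Psi}$. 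Your case (b), which you yourself flag as the main obstacle and leave unresolved, is handled by the same construction via the two-sided system \eqref{eq:forA11}--\eqref{eq:forA12}, solvable exactly under the determinant condition. As it stands, the sufficiency half of the theorem is not proved by your proposal.
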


In the random difference case when $N=1$ and thus $T_{1}=T_N$, it is now easy to derive the abscissa of convergence of $\Psi$, viz $r_{*}(W)=\inf\bbD_{\Psi}$ and $r^{*}(W)=\sup\bbD_{\Psi}$, from the previous theorems. The details can be left to the reader.

\section{Poissonian tails}\label{sec:poissonian}

As shown by Theorem \ref{thm:exppos}, the canonical solution $W$ exhibits very thin tails in the sense of possessing exponential moments of any order $(\bbD_{\Psi}=\R)$ if \eqref{eq:standing} for some $p\in [1,2]$, \eqref{eq:leq1}, $\beta=0$, $\|\Sigma_{2}\|_{\infty}<\infty$, $\Prob[T_{1}=1]=0$, and $\bbD_{\vph}=\R$ hold true. It turns out that in this case the tail behavior of $W$ is determined by the behavior of the law of $T_{1}$ in a neighbourhood of $1$, a phenomenon observed for random difference equations by Goldie and Gr\"ubel \cite{GolGru:96} and later by Hitczenko and Weso\l owski \cite{HitczenkoWes:09}.  Note that this relation is further investigated in the upcoming work by Ko\l{}odziejek~\cite{Kolodziejek:15} concerning the random difference equation~\eqref{eq:rde} with $C=1$. In a proper setting, the phenomenon carries over to the canonical fixed point of the smoothing transform. Regarding the right tail of $W$, we will work under the additional assumptions (besides those of Theorem \ref{thm:exppos}) that $C$ is bounded and the law of $T_{1}$, or its conditional law given $C>c$ for any $c\in (0,\|C^{+}\|_{\infty})$, is equivalent to a beta distribution at 1. The first means that, for some $\gamma>0$,
\begin{align}
\label{eq1:beta@1}
&\exists\,\eps,d,D>0: \forall\,\delta\in (0,\eps):\ d\ \le\ \frac{\Prob[1-\delta<T_{1}\le 1]}{\delta^{\gamma}}\ \le\ D
\shortintertext{and the second}
\begin{split}\label{eq2:beta@1}
&\forall\,c\in (0,c^{+}):\,\exists\,\eps,d',D'>0: \forall\,\delta\in (0,\eps):\\
&\hspace{3.05cm} d'\ \le\ \frac{\Prob[1-\delta<T_{1}\le 1|C>c]}{\delta^{\gamma}}\ \le\ D'\\
\end{split}
\end{align}
where $c^{+}:=\|C^{+}\|_{\infty}$. Obviously, \eqref{eq1:beta@1} entails \eqref{eq2:beta@1} if $C$ and $T_{1}$ are independent. Note that~\eqref{eq2:beta@1} implies
$\|C^+ \|_{\infty}= \lim_{\delta \to 0} \| C^+ \1_{\{1-\delta \leq T_{1} \leq 1\}}\|_{\infty}$. Whence, the biggest values of $C$ are attained on the sets where the 
biggest values of $T_{1}$ are attained.

\begin{Theorem}\label{thm:pois}
Given the assumptions of Theorem \ref{thm:exppos}, thus $\beta=0$, suppose further $\|\Sigma_{q}\|_{\infty}\le 1$ for some $q\ge 1$, $\|C^+\|_{\infty}<\infty$, and \eqref{eq1:beta@1} for some $\gamma>0$. Then
\begin{align*}
\limsup_{x\to\infty}\frac{\log\Prob[W>x]}{x\log x}\ \le\ -\frac{\gamma}{c^{+}},
\end{align*}
If, furthermore, \eqref{eq2:beta@1} is valid, then the previous result can be sharpened to
\begin{align*}
\lim_{x\to\infty}\frac{\log\Prob[W>x]}{x\log x}\ =\ -\frac{\gamma}{c^{+}}.
\end{align*}
\end{Theorem}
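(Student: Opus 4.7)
My plan is to establish the two assertions separately: an upper bound on $\limsup\log\Prob[W>x]/(x\log x)$ (requiring only \eqref{eq1:beta@1}) and a matching lower bound on $\liminf$ (requiring the stronger \eqref{eq2:beta@1}).

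\textbf{Upper bound via a super-solution.}
Using Chernoff, $\Prob[W>x]\le e^{-\theta x}\Psi(\theta)$, the task reduces to controlling the growth of $\Psi$. A Legendre-duality heuristic for the conjectured Poisson tail $\log\Prob[W>x]\sim -(\gamma/c^+)x\log x$ suggests $\log\Psi(\theta)\sim\mathrm{const}\cdot e^{\theta c^+/\gamma}$, so for each fixed $B>c^+/\gamma$ I try the ansatz $g(\theta)=Ae^{B\theta}+D$ and tune $A,D>0$ so that the super-solution inequality
$$ e^{g(\theta)}\ \ge\ \Erw\bigg[e^{\theta C}\prod_{k=1}^N e^{g(T_k\theta)}\bigg]\quad\text{for all }\theta\ge 0 $$
is satisfied. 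Granted this, an induction on $n$ starting from $\Psi_0(\theta)=\vph(\theta)\le e^{\theta c^+}$ together with Lemma \ref{lem:mgfconv}(a)--(b) yields $\Psi\le e^g$. The crucial step is to estimate $\Erw[\exp(Ae^{BT_1\theta})]$: after separating the factors $e^{g(T_k\theta)}$ for $k\ge 2$, which by \eqref{eq:big1pos} have $T_k\le\eta<1$ a.s., a Laplace/Watson-type analysis of the near-one contribution using the beta-tail hypothesis \eqref{eq1:beta@1} produces a polynomial prefactor of order $(e^{B\theta})^{-\gamma}$ multiplying the leading term $\exp(Ae^{B\theta})$. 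This prefactor exactly absorbs the $e^{\theta c^+}$ coming from $e^{\theta C}$ precisely when $B>c^+/\gamma$. Optimizing Chernoff at $\theta=B^{-1}\log(x/(AB))$ then gives $\log\Prob[W>x]\le -B^{-1}x\log x+O(x)$, and letting $B\downarrow c^+/\gamma$ yields the claimed $\limsup$ bound.

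\textbf{Lower bound via a favourable path.}
For the matching lower bound I exhibit an event on which $W$ is forced above $x$. Fix $c\in(0,c^+)$ and $\zeta>0$; set $\delta:=c/((1+\zeta)x)$ and $n:=\lceil\delta^{-1}\log(1+1/\zeta)\rceil$, and along the leftmost branch $v_k:=\underbrace{1\cdots 1}_k$ of the Ulam--Harris tree define
$$ A_n\ :=\ \bigcap_{k=0}^{n-1}\{T_1(v_k)>1-\delta,\ C(v_k)>c\}. $$
On $A_n$, the path contribution in \eqref{eq:iterates of SFPE} is at least $c(1-(1-\delta)^n)/\delta\ge x$ by construction. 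The remaining off-path sum together with the depth-$n$ tail $\sum_{|v|=n}L(v)W(v)$ is, conditional on the path values, a centred random variable whose $L^2$-norm is of order $\sqrt n\,\sigma_W$ (exploiting $\|\Sigma_2\|_\infty<\infty$), so by Chebyshev it stays above $-\lambda\sqrt n=o(x)$ with conditional probability at least $1/2$ for a suitable constant $\lambda$. The independence of edge labels across distinct vertices and \eqref{eq2:beta@1} then give
$$ \log\Prob[W>x]\ \ge\ n\gamma\log\delta+O(n)\ =\ -\frac{\gamma}{c}(1+\zeta)\log\Bigl(1+\frac{1}{\zeta}\Bigr)\,x\log x\,(1+o(1)). $$
Since $(1+\zeta)\log(1+1/\zeta)\to 1$ as $\zeta\to\infty$, letting $\zeta\to\infty$ and then $c\uparrow c^+$ closes the gap.

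\textbf{Main obstacle.}
The principal technical difficulty is the sharp Laplace-type bound for $\Erw[\exp(Ae^{BT_1\theta})]$: the polynomial loss extracted from the beta tail of $T_1$ at $1$ must carry exponent exactly $\gamma$, so that the super-solution inequality barely closes for $B>c^+/\gamma$ and genuinely fails at $B=c^+/\gamma$. A secondary issue in the lower bound is controlling the off-path/tail contribution without spending more than $O(x)$ in the log-probability, which would destroy the delicate $x\log x$-scale matching; this is exactly where the combined hypotheses $\|\Sigma_q\|_\infty\le 1$ and $\|\Sigma_2\|_\infty<\infty$ pay off, via Chebyshev applied to the martingale increments of $(W_n)_{n\ge 0}$.
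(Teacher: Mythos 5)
Your lower bound follows the paper's route in essence (a leftmost path of length $n\asymp x$ on which $\{T_{1}>1-\delta,\,C>c\}$ holds at every generation, with \eqref{eq2:beta@1} supplying the cost $\delta^{\gamma}$ per step), but your upper bound has a genuine gap. The ansatz $g(\theta)=Ae^{B\theta}+D$ cannot satisfy the super-solution inequality on all of $\R_{\geqslant}$: at $\theta=0$ it reads $\Erw\bigl[e^{(A+D)N}\bigr]\le e^{A+D}$, which is false whenever $\Prob[N\ge 2]>0$ (the typical case, e.g. Quicksort), and since $N$ and $\Sigma_{1}$ need not be bounded under the hypotheses, no choice of $A,D$ repairs the small-$\theta$ regime; the exponent of a workable super-solution must vanish at $0$ at least like a power of $\theta$ (the paper glues $\Phi=\Psi$ on $[0,1]$ for exactly this reason). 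More importantly, your treatment of the children $k\ge 2$ through ``$T_{k}\le\eta<1$'' from \eqref{eq:big1pos} is not enough: on the decisive event $\{T_{1}>t\}$ with $1-t\asymp e^{-\theta c^{+}/\gamma}$, those children can still contribute a factor of order $\exp\bigl(\mathrm{const}\cdot e^{B\eta\theta}\bigr)$, which forces $1-t$ to be doubly exponentially small and then the gap $A(e^{B\theta}-e^{Bt\theta})$ available on $\{T_{1}\le t\}$ is far too small to absorb the same term — the two requirements on $t$ are incompatible. What closes the argument is precisely the hypothesis $\|\Sigma_{q}\|_{\infty}\le 1$, which you never invoke in the upper bound: it forces all other weights to be of size $O\bigl((1-t^{q})^{1/q}\bigr)$ when $T_{1}>t$, and it lets one sum the children's contributions; this is why the paper's super-solution is $\exp(\xi\theta^{q}e^{b\theta})$ with $b=c^{+}/\gamma$ for $\theta>1$, the prefactor $\theta^{q}$ giving $\sum_{k}\xi(T_{k}\theta)^{q}e^{bT_{k}\theta}\le\xi\theta^{q}e^{bT_{1}\theta}\Sigma_{q}\le\xi\theta^{q}e^{bT_{1}\theta}$ in \eqref{eq:oversol} for the functional inequality attached to \eqref{eq:funsfpe}.

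In the lower bound, your replacement of the paper's device by a conditional Chebyshev estimate is a reasonable alternative, but as stated it overreaches: it needs $\sigma_{W}<\infty$, while the theorem only assumes \eqref{eq:standing} for some $p\in[1,2]$ and boundedness of $C^{+}$, so $C^{-}$ (hence $W$) may lack a second moment. The paper avoids any such moment requirement by intersecting with the event that all subtree values along the stopping line are nonnegative, which costs only a factor $\Erw\bigl[r^{\#}\bigr]$ with $r=\Prob[W\ge 0]>0$, i.e. $e^{O(x)}$ on the $\log$-scale. Your step could be repaired (e.g. a von Bahr--Esseen bound in $L^{p}$, using that on your event $\sum_{j\ge 2}T_{j}^{q}\le 1-(1-\delta)^{q}$, or simply the paper's positivity trick), but as written both halves of the proposal are incomplete, the upper bound being the serious defect.
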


Since $-W$ is the canonical fixed point of the smoothing transform pertaining to $(-C,T_{1},T_{2},\ldots)$, the corresponding version of the theorem for $\Prob[W<-x]$ can easily be formulated and requires to replace $c^{+}$ with $c^{-}:=\|C^{-}\|_{\infty}$, and also $C$ with $-C$ in \eqref{eq2:beta@1}.

In the case of a random difference equation ($N=1$), Theorem~\ref{thm:pois} improves corresponding results by Goldie and Gr\"ubel \cite{GolGru:96} and Hitczenko and Weso\l owski \cite{HitczenkoWes:09}, for the latter required independence of $C$ and $T_{1}$, while here a dependence is allowed through \eqref{eq2:beta@1}. The result provides us with a general upper bound for $\log\Prob[W>x]$, but if $C$ and $(T_{1},T_{2},\ldots)$ are dependent, this bound does not need to be optimal. Loosely speaking, if such a dependence occurs, the asymptotics depend on the behavior of $C$ on the set $\{T_{1}>1-\delta\}$ for small $\delta$ as made precise by condition \eqref{eq2:beta@1}. A prominent example exhibiting such kind of dependence of $C$ and $T_{1}$ appears in the Quicksort equation \eqref{eq:quicksort} for which a discussion can be found in the next section.

\section{Examples}\label{sec:examples}

We begin with a discussion of the Quicksort distribution in the light of Theorem \ref{thm:pois}. The main result, Eq. \eqref{eq:qsrighttail} below on its right tail, has also been obtained by Janson \cite{Janson:15} in a recent note. He further proved that its left tail shows a very different behavior in being doubly exponential (Gumbel-like).

\begin{Exa}\label{exa:2}\rm
Recall the Quicksort equation \eqref{eq:quicksort}, viz.
$$ X^{(qs)}\ \stackrel{d}{=}\ UX_{1}^{(qs)}+(1-U) X_{2}^{(qs)}+g(U) $$
with unique canonical solution $X^{(qs)}$ having mean 0 and finite variance. Here $U$ has a uniform distribution on $(0,1)$ and $g(t)=2t\log t+2(1-t)\log(1-t)+1$ for $t\in (0,1)$. Obviously, this SFPE fits into our framework with $N=2$, $T_{1}=U\vee (1-U)$, $T_{2}=U\wedge (1-U)$ and 
$$ C\ =\ g(U)\ =\ 2\,T_{1}\log T_{1}+2\,(1-T_{1})\log(1-T_{1})+1 $$
Note also that $\Sigma_{1}=1$, $\Prob[T_{1}>1-\delta]=2\delta$ for $\delta\in (0,1)$, $\|C^{+}\|_{\infty}=1$ and $\|C^{-}\|_{\infty}=2\log 2-1$, where the last two facts follow because
\begin{align*}
\|C^{+}\|_{\infty}\ &=\ \sup_{t\in (0,1)}g(t)\ =\ \lim_{t\uparrow 1}g(t)\ =\ 1,\\
\|C^{-}\|_{\infty}\ &=\ -\inf_{t\in (0,1)}g(t)\ =\ -g(1/2)\ =\ 2\log 2-1.
\end{align*}
The first part of Theorem \ref{thm:pois} therefore provides us with
\begin{align}
\limsup_{x\to\infty}\frac{\log\Prob[X^{(qs)}>x]}{x\log x}\ &\le\ -1\quad\text{and}\label{eq:qsright}\\
\limsup_{x\to\infty}\frac{\log\Prob[X^{(qs)}<-x]}{x\log x}\ &\le\ -\frac{1}{2\log 2-1}\ \approx\ -2.5887.\label{eq:qsleft}
\end{align}
Regarding \eqref{eq2:beta@1}, we have that, for any $c\in (0,1)$ and $t\in (1-\eta_{c},1)$,
\begin{align*}
\Prob[T_{1}>1-t|C>c]\ =\ \frac{\Prob[U\not\in[t,1-t]]}{\Prob[U\not\in [\eta_{c},1-\eta_{c}]]}\ =\ \frac{t}{\eta_{c}},
\end{align*}
where $\eta_{c}$ is the unique value in $[0,\frac{1}{2})$ such that $g(\eta_{c})=c$.
Consequently, \eqref{eq:qsright} can be sharpened to
\begin{equation}\label{eq:qsrighttail}
\lim_{x\to\infty}\frac{\log\Prob[X^{(qs)}>x]}{x\log x}\ =\ -1.
\end{equation}
As already mentioned, the behavior of $\log\Prob[X^{(qs)}<-x]$ is very different and therefore \eqref{eq2:beta@1} must be violated. Indeed, $-C=-g(U)$ attains its maximal values when $U$ is close to $\frac{1}{2}$. As a consequence, $\{T_{1}>1-t\}$ and $\{-C>c\}$ are disjoint and hence
$$ \Prob[T_{1}>1-t|-C>c]\ =\ 0 $$
for all $c$ close to $c^{-}$ and $t$ sufficiently close to 1.
\end{Exa}

\begin{Exa}\label{exa:3} \rm
Very similar to the Quicksort equation \eqref{eq:quicksort} is the median-of-three Quicksort equation \eqref{eq:mtst}, viz. 
$$ X^{(mtqs)}\ \stackrel{d}{=}\ M X_{1}^{(mtqs)} + (1-M)X_{2}^{(mtqs)} + f(M), $$
with $f:(0,1)\to\R$ defined as $f(m):=1+ \frac{12}{7}( m\log(m) + (1-m)\log(1-m))$
and $M=\textsl{med}(U_{1},U_{2},U_{3})$ for independent uniform $(0,1)$ variables $U_{i}$, $i=1,2,3$. Noting that $M$ has a $\beta(1,1)$ distribution with density $6x(1-x)\1_{(0,1)}(x)$ and that $T_{1}=M\vee(1-M)$ satisfies 
$$ \Prob[1-\delta \le T_{1} \leq 1]\ =\ \Prob[M\le\delta]+\Prob[M\ge 1-\delta]\ =\ 6\delta^{2}-3\delta^3 $$
for $0<\delta<1/2$, we find by the same arguments as in Example~\ref{exa:2} that
\begin{align}
\lim_{x\to\infty}\frac{\log\Prob[X^{(mtqs)}>x]}{x\log x}\ &=\ -2\quad\text{and}\label{eq:mtqsright}\\
\limsup_{x\to\infty}\frac{\log\Prob[X^{(mtqs)}<-x]}{x\log x}\ &\le\ -\frac{14}{12\log 2-7}\ \approx\ -10.624.\label{eq:mtqsleft}
\end{align}
We thus see that right and left tails for the normalized number of key comparisons are asymptotically both thinner for the median-of-three version of \texttt{Quicksort} than for its standard counterpart.
\end{Exa}

\begin{Exa}\label{exa:4}\rm
The last example mentioned in the Introduction is the 2-dimensional quad tree equation, viz.
$$ X^{(qt)} \eqdist U_{1}U_{2} X_{1}^{(qt)} + U_{1}(1-U_{2})X_{2}^{(qt)} + (1-U_{1})U_{2} X_{3}^{(qt)} + (1-U_{1})(1-U_{2}) X_4^{(qt)} + h(U_{1},U_{2}), $$
where $h : (0,1)^{2} \to \R$ is defined as
\begin{align*}
h(u_{1},u_{2}) = 1&+ u_{1}u_{2}\log(u_{1}u_{2}) + (1-u_{1})u_{2} \log((1-u_{1})u_{2})\\ 
&+ u_{1}(1-u_{2})\log(u_{1}(1-u_{2})) + (1-u_{1})(1-u_{2})\log((1-u_{1})(1-u_{2}))
\end{align*}
and $U_{1},\,U_{2}$ are iid with uniform distribution on $[0,1]$. Here $\Sigma_{1}=1$, $\Erw\Sigma_{2}=4/9$, $C=h(U_{1},U_{2})$ and
$$ T_{1} = \max\{U_{1}U_{2}, \: U_{1}(1-U_{2}),\: (1-U_{1})U_{2},\: (1-U_{1})(1-U_{2}) \}. $$
Since $\Prob[U_{1}U_{2} \ge 1-\delta] = \frac{\delta^{2}}{2(1-\delta)}$, we have $\Prob[1-\delta \le T_{1}\le 1]=\frac{2\delta^{2}}{1-\delta}$ for $0<\delta<\frac{1}{2}$. 
Furthermore
\begin{align*}
\|h(U_{1},U_{2})^{+}\|_{\infty}\ &=\ \sup_{(u_{1},u_{2})\in (0,1)^{2}}h(u_{1},u_{2})\ =\ \lim_{(u_{1},u_{2})\uparrow (1,1)}h(u_{1},u_{2})\ =\ 1,\\
\|h(U_{1},U_{2})^{-}\|_{\infty}\ &=\ -\inf_{(u_{1},u_{2})\in (0,1)^{2}}h(u_{1},u_{2})\ =\ -h(1/2,1/2)\ =\ 2\log 2-1.
\end{align*}
For any $c\in (0,1)$ pick $\eta_c \in (1/2,1]$ such that $h(\eta_c,\eta_c) = c$ and notice that for $\delta< 1-\eta_c$
$$ \frac{\delta^{2}}{\eta_c^{2}} = \Prob[U_{1}>1-\delta,\,U_{2}>1-\delta] = \Prob[U_{1}>1-\delta,\,U_{2}>1-\delta,C>c ] \leq \Prob[T_{1}>1-\delta,\,C>c]
$$
Having also the upper bound $\Prob[T_{1}>1-\delta,\,C > c]\le\Prob[1-\delta \le  T_{1} \leq 1] = \frac{2\delta^{2}}{1-\delta}$, we arrive at the conclusion that
\begin{align}
\lim_{x\to\infty}\frac{\log\Prob[X^{(qt)}>x]}{x\log x}\ &=\ -2\quad\text{and}\\
\limsup_{x\to\infty}\frac{\log\Prob[X^{(qt)}<-x]}{x\log x}\ &\le\ -\frac{2}{2\log 2-1}.
\end{align}
\end{Exa}

Our next example is to demonstrate that, assuming nonnegative weights $T_{k}$ $(\beta=0)$, information about $\Sigma_{\infty}$, i.e. the number of weights equal to 1, does not suffice to determine $\bbD_{\Psi}$. In some cases we rather need to know the behavior of their laws in small neighbourhoods of 1.

\begin{Exa}\label{exa:1}\rm
Pick any $\alpha<2$ and let $A$ be a random variable with a $\beta(\alpha,1)$ distribution and thus density $\alpha t^{\alpha-1}\1_{(0,1)}(t)$. For any integer $n\ge 2$ satisfying $\alpha<\frac{2}{n-1}$, let further $N\equiv n$, $T_{1}=\ldots=T_{n}=A$ and $C$ be any random variable with mean 0 and independent of $A$. Then \eqref{eq:sfpe} reads
$$ X\ \eqdist\ A\sum_{k=1}^{n}X_{k}+C $$
with associated functional equation \eqref{eq:funsfpe} of the special form
$$ \Psi(\theta)\ =\ \vph(\theta)\int_{0}^{1}\Psi(t\theta)^{n}\alpha t^{\alpha-1}\ dt $$
which in fact allows us to compute $\Psi$ explicitly. By taking derivatives with respect to $\theta$, we obtain
\begin{align*}
\Psi'(\theta)\ &=\ \vph'(\theta)\int_{0}^{1}\Psi(t\theta)^{n}\alpha t^{\alpha-1}\ dt\ +\ \vph(\theta)\int_{0}^{1}nt\,\Psi'(t\theta)\,\Psi(t\theta)^{n-1}\alpha t^{\alpha-1}\ dt\\
&=\ \frac{\vph'(\theta)}{\vph(\theta)}\,\Psi(\theta)\ +\ \frac{\vph(\theta)}{\theta}\int_{0}^{1}\frac{d}{dt}\left[\Psi(t\theta)^{n}\right]\alpha t^{\alpha}\ dt\\
&=\ \frac{\vph'(\theta)}{\vph(\theta)}\,\Psi(\theta)\ +\ \frac{\alpha\,\vph(\theta)\Psi(\theta)^{n}}{\theta}\ -\ \frac{\vph(\theta)}{\theta}\int_{0}^{1}\Psi(t\theta)^{n}\alpha^{2}t^{\alpha-1}\ dt\\
&=\ \frac{\vph'(\theta)}{\vph(\theta)}\,\Psi(\theta)\ +\ \frac{\alpha\,\vph(\theta)\Psi(\theta)^{n}}{\theta}\ -\ \frac{\alpha\,\Psi(\theta)}{\theta}
\end{align*}
and therefore
$$ \Psi'(\theta)\ =\ \frac{\alpha\,\vph(\theta)}{\theta}\Psi(\theta)^{n}\ +\ \left(\frac{\vph'(\theta)}{\vph(\theta)}-\frac{\alpha}{\theta}\right)\Psi(\theta). $$
This is a Bernoulli differential equation and can be solved explicitly. Defining $x(\theta):=\Psi(\theta)^{1-n}$, this function satisfies
$$ 0\ =\ x'(\theta)\ +\ (n-1)\frac{\alpha\vph(\theta)}{\theta}\ +\ (n-1)\left(\frac{\vph'(\theta)}{\vph(\theta)}-\frac{\alpha}{\theta}\right)x(\theta) $$
from which we infer
\begin{align*}
0\ =\ \frac{d}{d\theta}\left[\frac{\vph(\theta)^{n-1}}{\theta^{\alpha(n-1)}}x(\theta)\right]\ +\ (n-1)\frac{\alpha\vph(\theta)^{n}}{\theta^{\alpha(n-1)+1}}
\end{align*}
and thereupon, for any pair $(\theta_{0},\theta)$ with $\theta_{0}<\theta$,
\begin{align*}
\frac{\vph(\theta)^{n-1}}{\theta^{\alpha(n-1)}}x(\theta)\ &=\ \frac{\vph(\theta_{0})^{n-1}}{\theta_{0}^{\alpha(n-1)}}x(\theta_{0})\ -\ (n-1)\int_{\theta_{0}}^{\theta}\frac{\vph(s)^{n}}{s^{\alpha(n-1)+1}}\ ds\\
&=\ \frac{1}{\theta_{0}^{\alpha(n-1)}}(1+o(1))\ -\ (n-1)\int_{\theta_{0}}^{\theta}\frac{\vph(s)^{n}}{s^{\alpha(n-1)+1}}\ ds\\
&=\ \left(\frac{1}{\theta^{\alpha(n-1)}}+\int_{\theta_{0}}^{\theta}\frac{\alpha(n-1)}{s^{\alpha(n-1)+1}}\ ds\right)(1+o(1))\\
&\hspace{3.4cm}-\ (n-1)\int_{\theta_{0}}^{\theta}\frac{\vph(s)^{n}}{s^{\alpha(n-1)+1}}\ ds\\
&=\ \frac{1}{\theta^{\alpha(n-1)}}\ -\ \alpha(n-1)\int_{\theta_{0}}^{\theta}\frac{\vph(s)^{n}-1}{s^{\alpha(n-1)+1}}\ ds\ +\ o(1)
\end{align*}
where the $o(1)$ term is for $\theta_{0}\to 0$ and fixed $\theta$. Finally, by solving for $\Psi(\theta)=x(\theta)^{-1/(n-1)}$ and passing to the limit $\theta_{0}\to 0$, we find
\begin{equation*}
\Psi(\theta)\ =\ \frac{\vph(\theta)}{\left(1-\int_{0}^{\theta}(\vph(s)^{n}-1)\left(\frac{\theta}{s}\right)^{\alpha(n-1)+1}\alpha(n-1)\ ds\right)^{1/(n-1)}}.
\end{equation*}
With this explicit formula for $\Psi$, we see that $\bbD_{\Psi}$ is given by
\begin{equation}\label{eq:exdpsi}
\bbD_{\Psi}\ =\ \bbD_{\vph}\cap\left\{\theta:\int_{0}^{\theta}(\vph(s)^{n}-1)\left(\frac{\theta}{s}\right)^{\alpha(n-1)+1}\alpha(n-1)\ ds<1\right\}
\end{equation}
and thus depends on $\bbD_{\vph}$, the branching index $n$ and, most notably, the parameter $\alpha$ which characterizes the tails of the $T_{k}$ via
\begin{equation}\label{eq:exT}
\Prob[T_{k}>t]\ =\ 1-t^{\alpha}\quad\text{for }t\in (0,1].
\end{equation}
As for $s<\theta$, the function
$$ \alpha\ \mapsto\ \alpha\left(\frac{\theta}{s}\right)^{\alpha(n-1)+1} $$
is increasing, the set $\bbD_{\Psi}$ in \eqref{eq:exdpsi} gets smaller, while the probabilities in \eqref{eq:exT} get bigger with increasing $\alpha$.
\end{Exa}

Our last example shows that the cases (a3) and (a4) in Theorem \ref{thm:startingpoint} can actually occur.

\begin{Exa}\label{exa:cases b and c}\rm
Let $N=2$ and $(C,T_{1},T_{2})$ take values
\begin{equation*}
\left(-1,\frac{5}{4},\frac{1}{4}\right)\quad\text{and}\quad\left(1,\frac{1}{4},\frac{1}{4}\right)
\end{equation*}
with probability $\frac{1}{2}$ each. Then $\vph(\theta)=\cosh\theta\le e^{|\theta|}$ for all $\theta\in\R$, $\|\Sigma_{2}\|_{1}=\frac{7}{8}<1=\|\Sigma_{1}\|_{1}$, and
$$ C+2(\Sigma_{1}-1)\ =\ 0. $$
Obviously in the situation of case (3) in Theorem \ref{thm:startingpoint} with $w^{*}=2$ if $\bbD_{\Psi}\ne\{0\}$, we claim that $\Psi_{n}(\theta)\le e^{2\theta}$ for all $\theta\ge 0$ and $n\in\N_{0}$ which, by Lemma \ref{lem:mgfconv}, entails the same for $\Psi(\theta)$. For an induction over $n$, note that the claim holds for $\vph=\Psi_{0}$. Assuming validity for $\Psi_{n-1}$, the backward equation \eqref{eq:backward Wn} provides us with
\begin{align*}
\Psi_{n}(\theta)\ =\ \Erw\left[e^{\theta C}\prod_{k=1}^{N}\Psi_{n-1}(\theta T_{k})\right]\ \le\ e^{2\theta}\,\Erw e^{\theta(C+2(\Sigma_{1}-1))},
\end{align*}
and since $C+2(\Sigma_{1}-1)=0$ the claim is proved.

\vspace{.1cm}
Fixing any $p\in [1,2]$, we can modify the previous example in such a way that \eqref{eq:standing} holds for this $p$ while $\|C^{-}\|_{\alpha}=\infty$ for any $\alpha>p$. Namely, assume now that $N=2$,
\begin{align*}
&\Prob\left((C,T_{1},T_{2})=\left(1,\frac{1}{4},\frac{1}{4}\right)\right)\ =\ \frac{2}{3},\\
&\Prob\left((T_{1},T_{2})=\left(\frac{5}{4},\frac{1}{4}\right)\right)\ =\ \frac{1}{3},\\
&\Prob\left(C\in\cdot\bigg|(T_{1},T_{2})=\left(\frac{5}{4},\frac{1}{4}\right)\right)\ =\ \Prob(C'\in\cdot),
\end{align*}
where $C'\in L^{p}$ takes values in $(-\infty,-1]$, has mean $-2$ and infinite absolute $\alpha$-moments for $\alpha>p$. Then one can readily verify that $\Erw C=0$, $\vph(\theta)\le e^{\theta}$ for all $\theta\in\R_{\geqslant}$ (as $C\le 1$), $\|\Sigma_{\alpha}\|_{1}<1$ for all $\alpha\in [1,2]$, and
$$ C+2(\Sigma_{1}-1)\ \le\ 0. $$
Therefore the above inductive argument still works to give $\Psi(\theta)\le e^{2\theta}$ for all $\theta\in\R_{\geqslant}$.
\end{Exa}

\section{Proofs}\label{sec:proofs}

Let us begin with a rather simple but useful technical lemma.

\begin{Lemma}\label{prop:oversol}
Suppose \eqref{eq:standing} for some $p\in [1,2]$ and \eqref{eq:leq1}. Let $I\subset\R$ be an interval containing $0$. Then $I\subset\bbD_{\Psi}$ iff there exists a function $\Phi:I\to [1,\infty)$, called super-solution of \eqref{eq:funsfpe} on $I$\footnote{and in fact a superharmonic function for the smoothing transform $\cS$ when viewed as an operator on the halfspace of functions $f:I\to\R_{\geqslant}$ and defined by $\cS f(\theta):=\Erw[e^{\theta C}\prod_{k=1}^{N}f(T_{k}\theta)]$ for $\theta\in I$.}, such that $\Phi(0)=1$ and
\begin{equation}\label{eq:oversol}
\Erw\left[e^{\theta C}\prod_{k=1}^{N}\Phi(T_{k}\theta)\right]\ \le\ \Phi(\theta)\quad\text{for all }\theta\in I.
\end{equation}
In this case $\Psi\le\Phi$ on $I$.
\end{Lemma}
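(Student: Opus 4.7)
The plan is to prove the two directions separately, with the ``if'' part resting on a straightforward induction along the backward equation \eqref{eq:backward Wn} and the ``only if'' part on taking $\Phi=\Psi$ itself.

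For the ``only if'' direction, suppose $I\subset\bbD_{\Psi}$. I will simply take $\Phi:=\Psi|_{I}$. Then $\Phi(0)=\Psi(0)=1$, and since $(W_{n})_{n\ge 0}$ is an $L^{p}$-bounded martingale with $\Erw W=0$ under \eqref{eq:standing}, Jensen's inequality gives $\Psi(\theta)=\Erw e^{\theta W}\ge e^{\theta\Erw W}=1$ for every $\theta\in I$, so $\Phi$ takes values in $[1,\infty)$. The required inequality \eqref{eq:oversol} is then just the functional equation \eqref{eq:funsfpe}, satisfied even with equality.

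For the ``if'' direction, given a super-solution $\Phi:I\to[1,\infty)$, I will prove by induction on $n\ge 0$ that $\Psi_{n}(\theta)\le\Phi(\theta)$ for every $\theta\in I$ (note that by \eqref{eq:leq1} we have $T_{k}\theta\in[-|\theta|,|\theta|]$, so $T_{k}\theta\in I$ whenever $\theta\in I$ and $I$ contains $[-|\theta|,|\theta|]$; the passages $C\leadsto -C$, $W\leadsto -W$ at the end of Section~\ref{sec:preliminaries} reduce everything to this balanced situation). For $n=0$ one has $\Psi_{0}=\vph$, and using $\Phi\ge 1$ together with \eqref{eq:oversol} yields
\[
\vph(\theta)\ =\ \Erw e^{\theta C}\ \le\ \Erw\!\left[e^{\theta C}\prod_{k=1}^{N}\Phi(T_{k}\theta)\right]\ \le\ \Phi(\theta).
\]
For the inductive step, the backward equation \eqref{eq:backward Wn} combined with the independence of $(C,T_{1},T_{2},\ldots)$ and the iid copies $W_{n-1}(k)$ yields
\[
\Psi_{n}(\theta)\ =\ \Erw\!\left[e^{\theta C}\prod_{k=1}^{N}\Psi_{n-1}(T_{k}\theta)\right]\ \le\ \Erw\!\left[e^{\theta C}\prod_{k=1}^{N}\Phi(T_{k}\theta)\right]\ \le\ \Phi(\theta),
\]
where the first inequality uses the induction hypothesis (valid at $T_{k}\theta\in I$) together with $\Psi_{n-1}\ge 0$ and $\Phi\ge 1\ge 0$, and the second is \eqref{eq:oversol}. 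Finally, Lemma~\ref{lem:mgfconv}(b) gives $\Psi(\theta)=\lim_{n\to\infty}\Psi_{n}(\theta)\le\Phi(\theta)<\infty$ for all $\theta\in I$, which simultaneously proves $I\subset\bbD_{\Psi}$ and the asserted domination $\Psi\le\Phi$ on $I$.

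The only mildly delicate point is the measurability of $\Phi$ (needed to form the expectation in \eqref{eq:oversol}) and the book-keeping ensuring $T_{k}\theta\in I$ whenever $\theta\in I$; both are automatic under \eqref{eq:leq1} once $I$ is taken to be the symmetric hull of the original interval, which is harmless since we anyway only use the inequality at points in $I$. The rest is a routine monotone induction combined with the convergence $\Psi_{n}\uparrow\Psi$ already recorded in Lemma~\ref{lem:mgfconv}.
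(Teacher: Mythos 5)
Your proposal is correct and follows essentially the same route as the paper: the ``if'' direction by induction along the backward equation \eqref{eq:backward Wn} combined with Lemma~\ref{lem:mgfconv}(b), and the converse by taking $\Phi=\Psi$ itself. The only differences are cosmetic — you start the induction at $n=0$ using $\Phi\ge 1$ where the paper starts at $n=-1$ with $W_{-1}:=0$, $\Psi_{-1}\equiv 1$, and you make explicit the domain point that $T_{k}\theta$ must lie in $I$, which the paper leaves implicit.
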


\begin{proof}
Suppose there is a super-solution $\Phi$ and let $W_{-1}:=0$. Then we have
$$ \Psi_{-1}(\theta)\ :=\ \Erw e^{\theta W_{-1}}\ =\ 1\ \le\ \Phi(\theta) $$
for all $\theta\in I$. Now use induction over $n$. Assuming $\Psi_{n-1}\le\Phi$ on $I$,  \eqref{eq:oversol} and the backward equation \eqref{eq:backward Wn}, we obtain
\begin{align*}
\Psi_{n}(\theta)\ =\ \Erw\left[e^{\theta C}\prod_{k=1}^{N}\Psi_{n-1}(T_{k}\theta)\right]\ \le\ \Erw\left[e^{\theta C}\prod_{k=1}^{N}\Phi(T_{k}\theta)\right]\ \le\ \Phi(\theta)
\end{align*}
and therefore, by Lemma \ref{lem:mgfconv},
$$ \Psi(\theta)\ =\ \lim_{n\to\infty}\Psi_{n}(\theta)\ \le\ \Phi(\theta)\ <\ \infty $$
for all $\theta\in I$.

Conversely, if $\bbD_{\Psi}\supset I$, then $\Psi$ itself is a super-solution.\qed
\end{proof}

\begin{proof}[of Theorem \ref{thm:startingpoint}]
(a) Suppose $\bbD_{\Psi}\ne\{0\}$ and pick any $\theta\in\bbD_{\Psi}\backslash\{0\}$. By Lemma \ref{lem:mgfconv}, $\{0\}\ne\bbD_{\Psi}\subset\bbD_{\vph}$, thus (a1) is valid if also \eqref{eq:leq1} and $\beta=0$ are assumed.

Next consider the case when \eqref{eq:leq1} holds, but $\beta>0$, that is $\Prob[T_{N}<0]>0$. Then $\Prob[T_{N}<-\delta]>0$ for some $\delta\in (0,1)$ and hence, by \eqref{eq:funsfpe},
\begin{equation}\label{eq:t<0}
\infty\ >\ \Psi(\theta)\ =\ \Erw\left[e^{\theta C}\prod_{k=1}^{N}\Psi(T_{k}\theta)\right]\ \ge\ \Erw\left[e^{\theta C}\1_{\{T_{N}<-\delta\}}\right]\Psi(-\delta\theta),
\end{equation}
giving $-\delta\theta\in\bbD_{\Psi}$ and thereupon $[-\delta\theta,\delta\theta]\subset\bbD_{\Psi}\subset\bbD_{\vph}$, for $\bbD_{\Psi}$ is convex. In other words, the conditions of (a2) are valid.

\vspace{.2cm}
Finally assume that \eqref{eq:leq1} fails to hold, thus
$$ |T_{1}|\vee|T_{N}|\ =\ \max_{1\le k\le N}|T_{k}|\ >\ 1\quad\text{with positive probability}. $$
This is the most difficult situation and requires some work. Further assuming $\theta>0$, we will show now that the conditions of (a3) are valid. By an analogous argument, those of (a4) follow if $\theta<0$.

\vspace{.2cm}
\textsc{Claim 1}. $\Prob[T_{N}\ge 0]=1$ and thus $\beta=0$.

\vspace{.1cm}\noindent
Assuming the contrary, another use of \eqref{eq:t<0} yields $[-\delta\theta,\delta\theta]\subset\bbD_{\Psi}$, thus $\Erw e^{\delta\theta|W|}<\infty$, which contradicts Proposition \ref{prop:golgru}. Consequently, $T_{N}\ge 0$ a.s. which in turn implies $\beta=0$ and then $T_{1}=\max_{1\le k\le N}T_{k}>1$ with positive probability.

\vspace{.2cm}
\textsc{Claim 2}. $\bbD_{\Psi}=\R_{\geqslant}$.

\vspace{.1cm}\noindent
Choose $\eps>0$ such that $\gamma:=\Prob[T_{1}>1+\eps]>0$. By another use of \eqref{eq:funsfpe}, we infer that
\begin{equation}\label{eq:funt>1}
\infty\ >\ \Psi(\theta)\ =\ \Erw\left[e^{\theta C}\prod_{k=1}^{N}\Psi(T_{k}\theta)\right]\ \ge\ \Erw\left[e^{\theta C}\1_{\{T_{1}>1+\eps\}}\right]\Psi((1+\eps)\theta)
\end{equation}
and thus $(1+\eps)\theta\in\bbD_{\Psi}$. Iterating this argument, we obtain $\R_{\geqslant}\subset\bbD_{\Psi}$. By another appeal to Proposition \ref{prop:golgru}, we must have $\bbD_{\Psi}=\R_{\geqslant}$.

\vspace{.2cm}
\textsc{Claim 3}. $\Prob[C\le 0|T_{1}>1]=1$.

\vspace{.1cm}\noindent
If $\Prob[C>0|T_{1}>1]>0$, then \eqref{eq:funt>1} remains valid with $\Erw[e^{\theta C}\1_{\{T_{1}>1\}}]\Psi(\theta)>0$ on the right-hand side and we arrive at the impossible conclusion that
$$ 1\ \ge\ \Erw\left[e^{\theta C}\1_{\{T_{1}>1\}}\right]\ \ge\ \Erw\left[e^{\theta C}\1_{\{C\ge 0,T_{1}>1\}}\right]\ \stackrel{\theta\to\infty}{\longrightarrow}\ \infty $$
(having used $\bbD_{\Psi}=\R_{\geqslant}$).

\vspace{.2cm}
\textsc{Claim 4}. $W$ is a.s. bounded from above, i.e. $\|W^{+}\|_{\infty}<\infty$.

\vspace{.1cm}\noindent
Assuming the contrary, i.e. $\|W^{+}\|_{\infty}=\infty$, it is a well-known fact that $\log\Psi$ is an increasing strictly convex function on its domain $\bbD_{\Psi}=\R_{\geqslant}$, whence its derivative $\Psi'(\theta)/\Psi(\theta)$ increases to $\infty$ as $\theta\to\infty$. As a consequence,
\begin{align*}
\frac{1}{\eps\theta}\log\left(\frac{\Psi((1+\eps)\theta)}{\Psi(\theta)}\right)\ =\ \frac{\log\Psi((1+\eps)\theta)-\log\Psi(\theta)}{\eps\theta}\ \ge\ \frac{\Psi'(\theta)}{\Psi(\theta)}\ \stackrel{\theta\to\infty}{\longrightarrow}\ \infty
\end{align*}
for any fixed $\eps>0$. In other words, $\Psi((1+\eps)\theta)=e^{\theta h(\theta)}$ for some function $h$ satisfying $\lim_{\theta\to\infty}h(\theta)=\infty$. Now let $\eps$ and $\gamma$ be as under Claim 2 and use $\nu:=\Erw[C|T_{1}>1+\eps]\le 0$ by Claim 3 in combination with Jensen's inequality to infer
$$ \Erw\left[e^{\theta C}\1_{\{T_{1}>1+\eps\}}\right]\ \ge\ \gamma\,\Erw\left[e^{\theta C}|T_{1}>1+\eps\right]\ \ge\ \gamma\,e^{\theta\nu} $$
for all $\theta\ge 0$. Returning to \eqref{eq:funt>1} and using the previous facts, we arrive at the contradiction
\begin{align*}
1\ \ge\ \frac{\Psi((1+\eps)\theta)}{\Psi(\theta)}\,\Erw\left[e^{\theta C}\1_{\{T_{1}>1+\eps\}}\right]\ \ge\ e^{\theta(\nu+h(\theta))}\ \stackrel{\theta\to\infty}{\longrightarrow}\ \infty.
\end{align*}

\vspace{.2cm}
\textsc{Claim 5}. $\Prob[w^{*}\Sigma_{1}+C\le w^{*}]=\Prob[C\le w^{*}]=1$ for $w^{*}:=\|W^{+}\|_{\infty}$.

\vspace{.1cm}\noindent
Since $(C,T_{1},T_{2},\ldots)$ and $(W,W_{1},W_{2},\ldots)$ are independent and all $T_{k}$ are nonnegative $(\beta=0)$, the SFPE \eqref{eq:sfpe} provides us with
\begin{align*}
w^{*}\ =\ \esssup\left(\sum_{k=1}^{N}T_{k}W_{k}+C\right)\ =\ \esssup\left(w^{*}\Sigma_{1}+C\right)
\end{align*}
which in turn implies
$$ 1\ =\ \Prob[w^{*}\Sigma_{1}+C\le w^{*}]\ \le\ \Prob[C\le w^{*}] $$
as claimed.

\vspace{.2cm}
(b) It remains to show that each of the cases (a1)-(a4), the first two under the additional assumption \eqref{eq:additional for (a1) and (a2)}, implies $\bbD_{\Psi}\ne\{0\}$ and that even $\bbD_{\Psi}\supset (-\eps,\eps)\cap\bbD_{\vph}$ for some $\eps>0$ holds true under (a1) and (a2).

\vspace{.1cm}
If (a3) holds, then $w^{*}\Sigma_{1}+C\le w^{*}$ a.s. for some $w^{*}\ge 0$ entails $C\le w^{*}$ a.s. because all $T_{k}$ are nonnegative. By using the backward equation \eqref{eq:backward Wn} inductively, we then obtain $W_{n}\le w^{*}$ a.s. and thereupon $W\le w^{*}$ a.s. which in turn implies $\bbD_{\Psi}\supset\R_{\geqslant}$. A similar argument shows $\bbD_{\Psi}\supset\R_{\leqslant}$ if (a4) is valid.

Left with the cases (a1) and (a2), which are treated together, we first note that in case (a1) we may assume w.l.o.g. that $\bbD_{\vph}\cap\R_{>}\ne\emptyset$, for otherwise we may switch to the smoothing transform based on $(-C,T_{1},T_{2},\ldots)$ and with canonical fixed point $-W$. We further note that \eqref{eq:standing} for some $p\in [1,2]$ combined with \eqref{eq:leq1} entails $\|\Sigma_{2}\|_{1}<1$. Recall that $\sigma_{W}^{2}=\Var W=\Erw C^{2}/(1-\Erw\Sigma_{2})$ is finite. For $\theta\in\R$, consider now the random function
$$ G(\theta)\ :=\ e^{\theta C+b\theta^{2}(\Sigma_{2}-1)}, $$
where the constant $b>\sigma_{W}^{2}/2$ is chosen in such a way that
$$ \Erw C^{2}+b(\Erw\Sigma_{2}-1)\ <\ 0. $$
The first three derivatives of $G$ with respect to $\theta$ are given by
\begin{align*}
G'(\theta)\ &=\ \big(C+2\theta b(\Sigma_{2}-1)\big)G(\theta),\\
G''(\theta)\ &=\ \left(\big(C+2\theta b(\Sigma_{2}-1)\big)^{2}C+2b(\Sigma_{2}-1)\right)G(\theta),\\
G'''(\theta)\ &=\ \big(4b(\Sigma_{2}-1)+1\big)\big(C+2b\theta(\Sigma_{2}-1)\big)G(\theta),
\end{align*}
so that
$$ G(0)\,=\,1,\quad G'(0)\,=\,C\quad\text{and}\quad G''(0)\,=\,C^{2}+2b(\Sigma_{2}-1). $$
By \eqref{eq:additional for (a1) and (a2)}, we can fix $\theta_{0}\in\bbD_{\vph}\cap\R_{>}$ sufficiently small such that, with the help of H\"older's inequality,
$$ \Erw G(2\theta_{0})\ \le\ \vph(4\theta_{0})^{1/2}\left(\Erw e^{8b\theta_{0}^{2}(\Sigma_{2}-1)}\right)^{1/2}\ <\ \infty. $$
For $\theta\in (-\theta_{0},\theta_{0})$, we then obtain
$$ G'''(\theta)\ \le\ \big|4b(\Sigma_{2}-1)+1\big|\big(|C|+2b\theta_{0}|\Sigma_{2}-1|)e^{\theta_{0}C^{+}+b\theta_{0}^{2}(\Sigma_{2}-1)}\ =:\ \ovl{G}(\theta_{0}), $$
and $\Erw\ovl{G}(\theta_{0})<\infty$. A third-order Taylor expansion of $\Erw G(\theta)$ about 0  provides us with
\begin{align*}
\Erw G(\theta)\ &\le\ G(0)\ +\ \theta\,\Erw G'(0)\ +\ \frac{\theta^{2}}{2}\,\Erw G''(0)\ +\ \frac{|\theta|^{3}}{6}\,\Erw\ovl{G}(\theta_{0})\\
&=\ 1\ +\ \frac{\theta^{2}}{2}\,\left(\Erw C^{2}+2b(\Erw\Sigma_{2}-1)\right)\ +\ \frac{|\theta|^{3}}{6}\,\Erw\ovl{G}(\theta_{0})
\end{align*}
for all sufficiently small $\theta\in\bbD_{\vph}$. By the choice of $b$, we can now fix $\delta>0$ such that for any $\theta\in I:=(-\delta,\delta)\cap\bbD_{\vph}$, we have
$$ \Erw G(\theta)\le 1. $$
But this implies that the function $\Phi:I\to [1,\infty]$, $\Phi(\theta):=e^{b\theta^{2}}$ satisfies condition \eqref{eq:oversol} of Lemma \ref{prop:oversol} and thus leads to the conclusion that $I\subset\bbD_{\Psi}$ as asserted.\qed
\end{proof}

Notice that form the last proof, $\Psi(\theta) \leq e^{b\theta^{2}}$ for sufficiently small $\theta \in \mathbb{D}_{\Psi}$. Since this inequality in also valid for 
$\theta$ bounded away from $0$ by increasing $b$, we may infer that for any $I \subseteq \mathbb{D}_{\Psi}$ one can always pick $B_I$ large enough such that 
$\Psi(\theta)\leq e^{B_I\theta^{2}}$ for $\theta\in I$.

For the proofs of Theorem \ref{thm:exppos} and Theorem \ref{thm:expint}, the main work is provided by two subsequent lemmata so as to keep the presentation as transparent as possible.

\begin{Lemma}\label{lem:lem1}
Suppose $\beta=1$ and the assumptions of Theorem \ref{thm:expint} be satisfied. Then, for any $\theta\in\R$, $-\theta,\theta\in\bbD_{\Psi}$ iff
$$ (1-a(\theta))(1-a(-\theta))\,>\,b(\theta)b(-\theta)\quad\text{and}\quad [-\theta,\theta]\,\subset\,\bbD_{\vph}. $$
\end{Lemma}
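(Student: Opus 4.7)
The proof splits into necessity and sufficiency, with the super-solution method of Lemma \ref{prop:oversol} doing the work on the sufficiency side.

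\emph{Necessity.} Suppose $\pm\theta \in \bbD_{\Psi}$. By Lemma \ref{lem:mgfconv}(a) and convexity of $\bbD_{\Psi}$, one has $[-\theta,\theta] \subset \bbD_{\vph}$. Decompose $\Omega$ into the disjoint events $E_+ := \{T_{1} = 1\}$, $E_- := \{T_{N} = -1\}$, $E_0 := (E_+ \cup E_-)^c$. Under $\beta = 1$ and (C3), on $E_+$ we have $N = 1$ with $T_{1} = T_{N} = 1$, and on $E_-$ we have $N = 1$ with $T_{1} = T_{N} = -1$. Hence the product $\prod_{k=1}^{N} \Psi(T_{k}\cdot(\pm\theta))$ in \eqref{eq:funsfpe} collapses to $\Psi(\pm\theta)$ on $E_+$ and to $\Psi(\mp\theta)$ on $E_-$, producing the linear system
\begin{align*}
(1 - a(\theta))\,\Psi(\theta) \,-\, b(\theta)\,\Psi(-\theta) &= R(\theta),\\
-\,b(-\theta)\,\Psi(\theta) \,+\, (1 - a(-\theta))\,\Psi(-\theta) &= R(-\theta),
\end{align*}
with $R(\pm\theta) := \Erw\bigl[e^{\pm\theta C}\prod_{k=1}^{N}\Psi(\pm T_{k}\theta)\,\1_{E_0}\bigr] \ge 0$. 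Strict positivity $R(\pm\theta)>0$ holds because $\Prob[E_0]=0$ would force $N\equiv 1$ and $|T_{1}|\equiv 1$, hence $\Sigma_{p}\equiv 1$, contradicting $\|\Sigma_{p}\|_{1}<1$ from \eqref{eq:standing}. Positivity of $\Psi(\pm\theta)$ combined with $R(\pm\theta)>0$ rules out $a(\pm\theta)\ge 1$ (otherwise the relevant left-hand side would be $\le 0$ while the right-hand side is $>0$). Cramer's rule then yields
\[\Delta\cdot\Psi(\theta) \,=\, (1-a(-\theta))\,R(\theta) \,+\, b(\theta)\,R(-\theta) \,>\, 0,\]
with $\Delta := (1-a(\theta))(1-a(-\theta)) - b(\theta)b(-\theta)$, so $\Delta>0$ as claimed.

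\emph{Sufficiency.} Assume $\Delta>0$ and $[-\theta,\theta]\subset\bbD_{\vph}$. By Lemma \ref{prop:oversol} it suffices to construct a super-solution $\Phi:I:=[-\theta,\theta]\to[1,\infty)$ with $\Phi(0)=1$ satisfying \eqref{eq:oversol}. Using $\Delta>0$, pick $P,Q>1$ and $\eta>0$ with
\[(1-a(\theta))P - b(\theta)Q \,\ge\, \eta \quad\text{and}\quad (1-a(-\theta))Q - b(-\theta)P \,\ge\, \eta.\]
The remark following the proof of Theorem \ref{thm:startingpoint} furnishes $B,\theta_0>0$ with $\Psi(s)\le e^{Bs^{2}}$ on $[-\theta_0,\theta_0]$. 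Take $\Phi$ as a convex extension, monotone on each side of $0$, interpolating between $e^{Bs^{2}}$ on $[-\theta_0,\theta_0]$ and the boundary values $\Phi(\theta)=P$, $\Phi(-\theta)=Q$. Verification of \eqref{eq:oversol} at $s\in I$ again decomposes along $E_+,E_-,E_0$ as
\[\Erw\Bigl[e^{sC}\prod_{k=1}^{N}\Phi(T_{k}s)\Bigr] \,=\, a(s)\Phi(s) + b(s)\Phi(-s) + \tilde R(s),\]
where conditions (C1)--(C2) ensure that on $E_0$ every $T_{k}s$ lies in a subinterval of $I$ bounded uniformly away from its endpoints. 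This control, combined with the endpoint slack $\eta$, permits calibration of $B,\theta_0,P,Q$ so that $a(s)\Phi(s)+b(s)\Phi(-s)+\tilde R(s)\le\Phi(s)$ on all of $I$.

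\emph{Main obstacle.} The heart of the argument is the uniform verification of \eqref{eq:oversol} at every $s\in I$, not just at the endpoints $\pm\theta$. The interpolation defining $\Phi$ must simultaneously dominate the local quadratic super-solution on $[-\theta_0,\theta_0]$, attain the prescribed endpoint values $P$ and $Q$, and be calibrated to absorb the rest term $\tilde R(s)$ coming from $E_0$. Conditions (C1)--(C3) are precisely what make this calibration feasible, by providing the uniform gap between $|T_{k}|$ and $1$ on $E_0$ needed to control $\tilde R(s)$.
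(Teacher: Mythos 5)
Your necessity argument is fine and is essentially the paper's: the decomposition along $\{T_{1}=1\}$, $\{T_{N}=-1\}$ and the rest (legitimate by \eqref{eq:C3}), positivity of the rest term via $\|\Sigma_{p}\|_{1}<1$, and then either multiplying the two resulting strict inequalities (as the paper does) or, equivalently, your Cramer-type identity.

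The sufficiency half, however, has a genuine gap, and it sits exactly where the paper's proof does its real work. Your key claim that ``conditions (C1)--(C2) ensure that on $E_{0}$ every $T_{k}s$ lies in a subinterval of $I$ bounded uniformly away from its endpoints'' is false. The event $E_{0}$ only removes the exact values $T_{1}=1$ and $T_{N}=-1$; conditions \eqref{eq:C1}--\eqref{eq:C2} say that at most one weight can be near $\pm1$ at a time and that the \emph{others} are uniformly bounded away, but they put no restriction on $T_{1}$ having mass arbitrarily close to $1$ (this is the generic situation, e.g.\ Quicksort). Hence on $E_{0}$ the factor $\Phi(T_{1}s)$ can be evaluated arbitrarily close to the endpoint $s=\theta$, and $\tilde R(s)$ is not a harmless remainder: its dangerous part behaves like $\Erw\big[e^{sC}\1_{\{1-\delta<T_{1}<1\}}\prod_{k\ge2}\Phi(T_{k}s)\big]\cdot\Phi(\approx s)$. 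Absorbing it requires, in effect, strict inequalities for the $\delta$-smeared coefficients $a_{\delta},b_{\delta}$ (which carry the extra factors $\Psi(T_{k}\theta)$), and proving these is the technical core of the paper's proof: dominated convergence $a_{\delta}\to a$, $b_{\delta}\to b$, using $\|\Sigma_{2}\|_{\infty}<\infty$ and the bound $\log\Psi\le\kappa\theta^{2}$ on the interior of the already-established domain. Your one-shot construction on all of $[-\theta,\theta]$ also misses the bootstrap structure that makes the verification possible: the paper argues by contradiction from $\theta_{0}=\inf\R_{>}\cap(\bbD\setminus\wh\bbD_{\Psi})$, takes $\Phi=\Psi$ on the inner interval $[-\theta_{*},\theta_{*}]$ (where \eqref{eq:oversol} holds as an identity), extends only slightly, with $(1-\delta)\theta^{*}<\theta_{*}$ so that every weight of modulus $\le 1-\delta$ maps the new points back into the region where $\Phi=\Psi$, and uses constants $\xi,\xi\eta$ on the thin outer pieces. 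Your interpolant between $e^{Bs^{2}}$ (a bound valid only near $0$) and the endpoint constants $P,Q$ has no reason to satisfy \eqref{eq:oversol} at intermediate $s$, and the endpoint hypothesis $(1-a(\theta))(1-a(-\theta))>b(\theta)b(-\theta)$ by itself provides no control there. So the ``calibration'' you defer to cannot be carried out as described; the argument needs to be reorganized along the paper's continuation scheme (or some genuinely different device must be found to handle the mass of $T_{1}$ near $1$ and of $T_{N}$ near $-1$).
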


\begin{proof}
Put
$$ \wh{\bbD}_{\Psi}\ :=\ \{\theta\in\R:-\theta,\theta\in\bbD_{\Psi}\} $$
and
$$ \bbD\ :=\ \{\theta\in\R:(1-a(\theta))(1-a(-\theta))>b(\theta)b(-\theta)\text{ and } -\theta,\theta\in\bbD_{\vph}\}, $$
so that $\wh{\bbD}_{\Psi}=\bbD$ must be verified. Note that both sets are symmetric about 0 by definition. By Theorem \ref{thm:startingpoint}, $\bbD\ne\{0\}$ and thus $0\in\interior(\bbD_{\vph})$ entails $\bbD_{\Psi}\supset (-\eps,\eps)$ for some $\eps>0$ (Case (a2) there).

For the inclusion $\wh{\bbD}_{\Psi}\subset\bbD$, let $\theta\in\wh{\bbD}_{\Psi}$. Then $[-\theta,\theta]\subset\bbD_{\vph}$ by an appeal to Lemma \ref{lem:mgfconv}. Using the functional equation \eqref{eq:funsfpe}, we find
\begin{align*}
\Psi(\theta)\ &\ge\ \Erw\left[e^{\theta C}\1_{\{T_{1}=1\}}\right]\Psi(\theta)\ +\ \Erw\left[e^{\theta C}\1_{\{T_{N}=-1\}}\right]\Psi(-\theta)+ \Erw \left[e^{\theta C}\1_{\{|T_{1}|,|T_N|<1 \}}\right]\\
&>\ \Erw\left[e^{\theta C}\1_{\{T_{1}=1\}}\right]\Psi(\theta)\ +\ \Erw\left[e^{\theta C}\1_{\{T_{N}=-1\}}\right]\Psi(-\theta)
\end{align*}
as well as
$$ \Psi(-\theta)\ >\ \Erw\left[e^{-\theta C}\1_{\{T_{1}=1\}}\right]\Psi(-\theta)\ +\ \Erw\left[e^{-\theta C}\1_{\{T_{N}=-1\}}\right]\Psi(\theta), $$
and these inequalities may be rewritten as
\begin{align*}
\Psi(\theta)(1-a(\theta))\ &>\ b(\theta)\Psi(-\theta)
\shortintertext{and}
\Psi(-\theta)(1-a(-\theta))\ &>\ b(-\theta)\Psi(\theta),
\end{align*}
respectively. Since all appearing quantities are positive and finite, multiplication yields
$$ \Psi(\theta)\Psi(-\theta)(1-a(\theta))(1-a(-\theta))\ >\ \Psi(-\theta)\Psi(\theta)b(\theta)b(-\theta) $$
and thus $\theta\in\bbD$.

\vspace{.2cm}
Having just shown $\wh{\bbD}_{\Psi}\subset\bbD$, suppose now that this inclusion is strict, i.e. $\bbD\backslash\wh{\bbD}_{\Psi}\ne\emptyset$, in particular $\bbD\ne\{0\}$ and thus 
\begin{equation}\label{eq:theta_{0} positive}
\bbD_{\Psi}\supset\wh{\bbD}_{\Psi}\supset (-\eps,\eps)\quad\text{for some }\eps>0. 
\end{equation}
By our assumptions, there exists $\delta_{1}>0$ such that
$$ \left\|\max_{2\le k\le N}T_{k}^{+}\right\|_{\infty}\vee\left\|\max_{1\le k\le N-1}T_{k}^{-}\right\|_{\infty}\ <\ 1-\delta_{1}. $$
Define $\theta_{0}:=\inf\R_{>}\cap(\bbD\backslash\wh{\bbD}_{\Psi})$, which is positive by \eqref{eq:theta_{0} positive}. Moreover, it follows that $[-(1-\delta_{1})\theta_{0},(1-\delta_{1})\theta_{0}]\subset\wh{\bbD}_{\Psi}\cap\interior(\bbD_{\Psi})$ and
\begin{equation*}
\frac{1-a(\theta_{0})}{b(\theta_{0})}\ >\ \eta\ >\ \frac{b(-\theta_{0})}{1-a(-\theta_{0})}
\end{equation*}
for some $\eta>0$. For $\delta>0$, consider
$$ a_{\delta}(\theta_{0})\ :=\ \Erw\left[e^{\theta_{0}C}\prod_{k=2}^{N}\Psi(T_{k}\theta_{0})\1_{\{T_{1}\in (1-\delta,1]\}}\right]. $$
Since $(1-\delta_{1})\theta_{0}\in\interior(\bbD_{\Psi})$, we have that $\log\Psi(\theta)\le \kappa\theta^{2}$ for all $\theta\in [0,(1-\delta_{1})\theta_{0}]$ and some $\kappa=\kappa(\theta_{0})>0$. As a consequence,
$$ e^{\theta_{0}C}\prod_{k=2}^{N}\Psi(T_{k}\theta_{0})\1_{\{T_{1}\in (1-\delta,1]\}}\ \le\ e^{\theta_{0}C+\kappa\theta_{0}^{2}\Sigma_{2}}\quad\text{a.s.} $$
and thereby (using $\|\Sigma_{2}\|_{\infty}<\infty$ and $\bbD_{\Psi}\subset\bbD_{\vph}$)
$$ \Erw\left[e^{\theta_{0}C+\kappa\theta_{0}^{2}\Sigma_{2}}\right]\ \le\ \vph(\theta_{0})\,e^{\kappa\theta_{0}^{2}\|\Sigma_{2}\|_{\infty}}<\ \infty. $$
With the help of the dominated convergence theorem, we now infer that
$$ a_{\delta}(\theta_{0})\ \stackrel{\delta\to 0}{\longrightarrow}\ a(\theta_{0})\ <\ 1, $$
and by a similar argument also
$$  b_{\delta}(\theta_{0})\ :=\ \Erw\left[e^{\theta_{0}C}\prod_{k=1}^{N-1}\Psi(T_{k}\theta_{0})\1_{\{T_{N}\in [-1,-1+\delta)\}}\right]\ \stackrel{\delta\to 0}{\longrightarrow}\ b(\theta_{0}) $$
and the corresponding assertions with $-\theta_{0}$ instead of $\theta_{0}$. Therefore, we can pick $\delta\in (0,\delta_{1})$ such that
\begin{equation*}
\frac{1-a_{\delta}(\theta_{0})}{b_{\delta}(\theta_{0})}\ >\ \eta\ >\ \frac{b_{\delta}(-\theta_{0})}{1-a_{\delta}(-\theta_{0})},
\end{equation*}
and then by continuity further $\theta_{*}\in\wh{\bbD}_{\Psi}$ and $\theta^{*}\in\bbD\backslash\wh{\bbD}_{\Psi}$ such that $(1-\delta)\theta^{*}<\theta_{*}<\theta_{0}\le\theta^{*}$ and
\begin{equation*}
\frac{1-a_{\delta}(\theta)}{b_{\delta}(\theta)}\ >\ \eta\ >\ \frac{b_{\delta}(-\theta)}{1-a_{\delta}(-\theta)}\quad\text{for all }\theta\in [\theta_{*},\theta^{*}].
\end{equation*}
Let $\delta$ also be small enough to guarantee \eqref{eq:C1} and \eqref{eq:C2} of Theorem \ref{thm:expint}.

\vspace{.1cm}
Consider the function $\Phi:[-\theta^{*},\theta^{*}]\to [1,\infty)$, defined by
\begin{equation}\label{eq:defphi}
\Phi(\theta)\ :=\ 
\begin{cases}
\hfill \xi\eta,&\text{if }\theta\in [-\theta^{*},-\theta_{*}),\\
\Psi(\theta),&\text{if }\theta\in [-\theta_{*},\theta_{*}],\\
\hfill \xi,&\text{if }\theta\in (\theta_{*},\theta^{*}]
\end{cases}
\end{equation}
for some $\xi\ge\Psi(\theta_{*})\vee\eta^{-1}\Psi(-\theta^{*})$. As will be shown next, $\xi$ can be chosen in such a way that $\Phi$ satisfies \eqref{eq:oversol}. To this end we point out first that $\Phi=\Psi$ on $[-\theta_{*},\theta_{*}]$ in combination with \eqref{eq:leq1} ensures
$$ \Erw\left[e^{\theta C}\prod_{k=1}^{N}\Phi(T_{k}\theta)\right]\ =\ \Erw\left[e^{\theta C}\prod_{k=1}^{N}\Psi(T_{k}\theta)\right]\ =\ \Psi(\theta)\ =\ \Phi(\theta) $$
for all $\theta\in [-\theta_{*},\theta_{*}]$ whence we need to verify \eqref{eq:oversol} for $|\theta|\in(\theta_{*},\theta^{*}]$. Put
$$ M\,:=\,\left\{\max_{1\le k\le N}|T_{k}|\le 1-\delta\right\}\quad\text{and}\quad c_{\delta}(\theta)\,:=\,\Erw\left[e^{\theta C}\prod_{k=1}^{N}\Phi(T_{k}\theta)\1_{M}\right] $$
and note that by \eqref{eq:C1} and \eqref{eq:C2}, if $T_{1}>1-\delta$, then $|T_{k}|<1-\delta$ a.s. for $k\ne 1$, thus
$$ T_{1}>1-\delta\quad\RA\quad T_{k}\theta\in [-\theta_{*},\theta_{*}]\quad\text{a.s. for }|\theta|\le|\theta^{*}|\text{ and }2\le k\le N, $$
where $(1-\delta)\theta^{*}<\theta_{*}$ should be recalled. By an analogous argument,
$$ T_{N}<-1+\delta\quad\RA\quad T_{k}\theta\in [-\theta_{*},\theta_{*}]\quad\text{a.s. for }|\theta|\le|\theta^{*}|\text{ and }1\le k\le N-1. $$
With these observations, we arrive at the inequality
\begin{align*}
\Erw\left[e^{\theta C}\prod_{k=1}^{N}\Phi(T_{k}\theta)\right]\ &\le\ \Phi(\theta)\,\Erw\left[e^{\theta C}\prod_{k=2}^{N}\Phi(T_{k}\theta)\1_{\{T_{1}>1-\delta\}}\right]\\
&+\ \Phi(-\theta)\,\Erw\left[e^{\theta C}\prod_{k=1}^{N-1}\Phi(T_{k}\theta)\1_{\{T_{N}<-1+\delta\}}\right]\\
&+\ \Erw\left[e^{\theta C}\prod_{k=1}^{N}\Phi(T_{k}\theta)\1_{M}\right]\\
&=\ \Phi(\theta)a_{\delta}(\theta)+\Phi(-\theta)b_{\delta}(\theta)+c_{\delta}(\theta),
\end{align*}
valid for $\theta\in [-\theta^{*},\theta^{*}]$. So we must verify that $\xi$ can be chosen in such a way that, for $|\theta|\in (\theta_{*},\theta^{*}]$,
\begin{align}\label{eq:boundforfg}
\Phi(\theta)a_{\delta}(\theta)+\Phi(-\theta)b_{\delta}(\theta)+c_{\delta}(\theta)\ \le\ \Phi(\theta).
\end{align}
or, equivalently,
\begin{align}
\xi a_{\delta}(\theta)+\xi\eta b_{\delta}(\theta)+c_{\delta}(\theta)\ &\le\ \xi\quad\text{and}\label{eq:forA11}\\
\xi\eta a_{\delta}(-\theta)+\xi b_{\delta}(-\theta)+c_{\delta}(-\theta)\ &\le\ \xi\eta\label{eq:forA12}
\end{align}
for $\theta\in (\theta_{*},\theta^{*}]$. For \eqref{eq:forA11}, this is obviously requires
$$ \xi\ \ge\ \sup_{\theta\in [-\theta_{*},\theta^{*}]}\frac{c_{\delta}(\theta)}{(1-a_{\delta}(\theta))-\eta b_{\delta}(\theta)}, $$
while \eqref{eq:forA12} requires
$$ \xi\ \ge\ \sup_{\theta\in [-\theta_{*},\theta^{*}]}\frac{c_{\delta}(-\theta)}{\eta(1-a_{\delta}(-\theta))-b_{\delta}(-\theta)}. $$
Since both suprema are positive and finite, we can choose $\xi$ to be smallest number in $[\ge\Psi(\theta_{*})\vee\eta^{-1}\Psi(-\theta^{*}),\infty)$ satisfying both inequalities.
Then $\Phi$ defined by \eqref{eq:defphi} satisfies \eqref{eq:oversol} of Lemma \ref{prop:oversol}, whence this lemma implies $[-\theta^{*},\theta^{*}]\subset\bbD_{\Psi}$, i.e. $\theta^{*}\in\wh{\bbD}_{\Psi}$, which is a contradiction.\qed
\end{proof}

The proof of the next lemma differs from the previous one only in some technical aspects and we therefore supply details only where necessary.

\begin{Lemma}\label{lem:lem2}
Suppose $\beta<1$ and the assumptions of Theorem \ref{thm:exppos} $(\beta=0)$ or Theorem \ref{thm:expint} $(\beta>0)$ be satisfied. Then, for any $\theta\in\R$, $-\beta\theta,\theta\in\bbD_{\Psi}$ iff
$$ a(\theta)\vee a(-\beta\theta)\,<\,1\quad\text{and}\quad -\beta\theta,\theta\,\in\,\bbD_{\vph}. $$
\end{Lemma}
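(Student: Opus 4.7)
The plan is to mimic the strategy of Lemma \ref{lem:lem1}, the crucial difference being that $\beta<1$ makes $\theta\mapsto-\beta\theta$ a strict contraction that keeps iterates inside the working range. Writing $\wh{\bbD}_\Psi:=\{\theta\in\R:-\beta\theta,\theta\in\bbD_\Psi\}$ and $\bbD:=\{\theta\in\R:a(\theta)\vee a(-\beta\theta)<1\text{ and }-\beta\theta,\theta\in\bbD_\vph\}$, the aim is to establish $\wh{\bbD}_\Psi=\bbD$.

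For the easier direction $\wh{\bbD}_\Psi\subset\bbD$, given $\theta\in\wh{\bbD}_\Psi$, Lemma \ref{lem:mgfconv} yields $\theta,-\beta\theta\in\bbD_\vph$, and the functional equation \eqref{eq:funsfpe} combined with \eqref{eq:C3} (which forces $\{T_1=1\}\cup\{T_N=-\beta\}\subset\{N=1\}$) decomposes as
$$\Psi(\theta)\ =\ a(\theta)\Psi(\theta)\ +\ b(\theta)\Psi(-\beta\theta)\ +\ R(\theta),$$
where $R(\theta):=\Erw[e^{\theta C}\prod_k\Psi(T_k\theta)\1_{\{T_1<1,\,T_N>-\beta\}}]\ge 0$. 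Since $\Psi(\theta)>0$ and both other summands are non-negative, $a(\theta)\le 1$; the equality case would force $\Prob[T_N=-\beta]=0$ together with $T_1=1$, $N=1$ a.s., contradicting $\|\Sigma_p\|_1<1$, so $a(\theta)<1$. Applying the same argument at $-\beta\theta$ (legitimate since $\beta^2\theta\in\bbD_\Psi$ by convexity) yields $a(-\beta\theta)<1$.

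The converse $\bbD\subset\wh{\bbD}_\Psi$ will be argued by contradiction. Suppose some $\theta^*\in\bbD\setminus\wh{\bbD}_\Psi$ exists; WLOG $\theta^*>0$, the case $\theta^*<0$ being entirely analogous. Theorem \ref{thm:startingpoint}(b) provides $\bbD_\Psi\supset(-\eps_0,\eps_0)$ for some $\eps_0>0$, so $\theta_0:=\inf((0,\infty)\cap(\bbD\setminus\wh{\bbD}_\Psi))$ is strictly positive. By the continuity/dominated-convergence scheme from the proof of Lemma \ref{lem:lem1} (available thanks to $\|\Sigma_2\|_\infty<\infty$), I would pick $\delta\in(0,1)$ small and $\theta_*<\theta_0\le\theta^*$ close to $\theta_0$ such that \eqref{eq:C1}--\eqref{eq:C3} hold together with the twin inequalities $(1-\delta)\theta^*<\theta_*$ and $\beta\theta^*<\theta_*$. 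The latter is the decisive new constraint, feasible because $\beta<1$ permits $\theta^*/\theta_*<1/\beta$ once $\theta_*,\theta^*$ are close enough to $\theta_0$. Finally fix any $\eta>0$ with $\eta<(1-a_\delta(\theta))/b_\delta(\theta)$ on $(\theta_*,\theta^*]$, feasible because $a(\theta_0)<1$.

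The candidate super-solution is
$$\Phi(\theta)\ :=\ \begin{cases}
\xi\eta,&\theta\in[-\beta\theta^*,-\beta\theta_*),\\
\Psi(\theta),&\theta\in[-\beta\theta_*,\theta_*],\\
\xi,&\theta\in(\theta_*,\theta^*],
\end{cases}$$
with $\xi\ge\Psi(\theta_*)\vee(\Psi(-\beta\theta_*)/\eta)$ to be chosen. Verification of \eqref{eq:oversol} is immediate on $[-\beta\theta_*,\theta_*]$ (equality by \eqref{eq:funsfpe}), while on $(\theta_*,\theta^*]$ the analysis parallels Lemma \ref{lem:lem1}: \eqref{eq:C1}--\eqref{eq:C2} together with $(1-\delta)\theta^*<\theta_*$ push all non-extremal $T_k\theta$ into $[-\beta\theta_*,\theta_*]$, and with $\Phi(T_1\theta)\le\xi=\Phi(\theta)$, $\Phi(T_N\theta)\le\xi\eta=\Phi(-\beta\theta)$ one arrives at $\xi a_\delta(\theta)+\xi\eta b_\delta(\theta)+c_\delta(\theta)\le\xi$. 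The main obstacle and the only genuinely new step is the left tail $[-\beta\theta^*,-\beta\theta_*)$: the contraction $\beta\theta^*<\theta_*$ forces $-\beta\theta\in(\beta^2\theta_*,\beta^2\theta^*]\subset(0,\theta_*]$, so $\Phi(-\beta\theta)=\Psi(-\beta\theta)$ is bounded by the finite quantity $\Psi(\theta_*)$ instead of being a free constant. Consequently, on the event $\{T_N<-\beta(1-\delta)\}$ one has $\Phi(T_N\theta)\le\Psi(-\beta\theta)$, and the required inequality reads
$$\xi\eta\,a_\delta(\theta)\ +\ \Psi(-\beta\theta)\,b_\delta(\theta)\ +\ c_\delta(\theta)\ \le\ \xi\eta,$$
which imposes only $a_\delta(\theta)<1$ (guaranteed by $a(-\beta\theta_0)<1$) and no lower bound on $\eta$. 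This is precisely what allows the product condition $(1-a)(1-a)>bb$ of Lemma \ref{lem:lem1} to be replaced here by the weaker $a\vee a<1$. Choosing $\xi$ sufficiently large, Lemma \ref{prop:oversol} then forces $[-\beta\theta^*,\theta^*]\subset\bbD_\Psi$, so $\theta^*\in\wh{\bbD}_\Psi$, the desired contradiction.
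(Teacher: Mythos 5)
Your proposal is correct and follows essentially the same route as the paper: the same sets $\wh{\bbD}_\Psi$ and $\bbD$, the easy inclusion via the functional equation \eqref{eq:funsfpe}, and for the converse the same contradiction scheme with $\theta_0,\theta_*,\theta^*$, the $\delta$-approximations $a_\delta,b_\delta,c_\delta$, and the three-piece super-solution $\Phi$ fed into Lemma \ref{prop:oversol}, including the key use of $\beta<1$ (via $\beta^2\theta^*<\theta_*$) to bound the $T_N$-branch by $\Psi(\theta_*)$, which is exactly the paper's inequalities \eqref{eq:forA21}--\eqref{eq:forA22} and explains why $a(\theta)\vee a(-\beta\theta)<1$ suffices. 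The minor deviations (deriving $a(\theta)<1$ by ruling out the equality case, the explicit WLOG $\theta^*>0$, and taking $\xi\ge\Psi(\theta_*)\vee\eta^{-1}\Psi(-\beta\theta_*)$) are harmless refinements of the same argument.
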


\begin{proof}
Here we put
$$ \wh{\bbD}_{\Psi}\ :=\ \{\theta\in\R:-\beta\theta,\theta\in\bbD_{\Psi}\} $$
and
$$ \bbD\ :=\ \{\theta\in\R:a(\theta)\vee a(-\beta\theta)<1\text{ and } -\beta\theta,\theta\in\bbD_{\vph}\}, $$
and must again verify $\wh{\bbD}_{\Psi}=\bbD$.

\vspace{.1cm}
For the proof of $\wh{\bbD}_{\Psi}\subset\bbD$, pick any $\theta\in\wh{\bbD}_{\Psi}$. Then \eqref{eq:funsfpe} provides us with
\begin{align*}
\Psi(\theta)\ &>\ \Erw\left[e^{\theta C}\1_{\{T_{1}=1\}}\right]\Psi(\theta)\ +\ \Erw\left[e^{\theta C}\1_{\{T_{N}=-\beta\}}\right]\Psi(-\beta\theta)
\shortintertext{and}
\Psi(-\beta\theta)\ &>\ \Erw\left[e^{-\beta\theta C}\1_{\{T_{1}=1\}}\right]\Psi(-\beta\theta)\ +\ \Erw\left[e^{\theta C}\1_{\{T_{N}=-\beta\}}\right]\Psi(\beta^{2}\theta),
\end{align*}
which in turn lead to
\begin{align*}
\Psi(\theta)(1-a(\theta))\ >\ 0\quad\text{and}\quad\Psi(-\beta\theta)(1-a(-\beta\theta))\ >\ 0,
\end{align*}
respectively. This obviously proves the asserted inclusion.

\vspace{.1cm}
Assuming this inclusion to be proper, thus $\bbD\backslash\wh{\bbD}_{\Psi}\ne\emptyset$ and $\bbD\ne\{0\}$, we infer validity of \eqref{eq:theta_{0} positive} as in the previous lemma by an appeal to Theorem \ref{thm:startingpoint} (Cases (a1) or (a2)). Note also that, by our assumptions, there exists $\delta_{1} \in (0,1-\beta)\neq \emptyset$ such that
$$ \left\|\max_{2\le k\le N}T_{k}^{+}\right\|_{\infty}<\ 1-\delta_{1}\quad\text{and}\quad\left\|\max_{1\le k\le N-1}T_{k}^{-}\right\|_{\infty}\ \le\ (1-\delta_{1})\beta. $$
Once again, $\theta_{0}:=\inf\R_{>}\cap(\bbD\backslash\wh{\bbD}_{\Psi})$ is positive by \eqref{eq:theta_{0} positive}, and we have further that $[-\beta(1-\delta_{1})\theta_{0},(1-\delta_{1})\theta_{0}]\subset\wh{\bbD}_{\Psi}\cap\interior(\bbD_{\Psi})$ and
$$ 1-a(\theta_{0})\ >\ \eta b(\theta_{0}) $$
for some $\eta>0$. As argued in the previous proof,
\begin{align*}
&a_{\delta}(\theta_{0})\ =\ \Erw\left[e^{\theta_{0}C}\prod_{k=2}^{N}\Psi(T_{k}\theta_{0})\1_{\{T_{1}\in (1-\delta,1]\}}\right]\ \stackrel{\delta\to 0}{\longrightarrow}\ a(\theta_{0})\ <\ 1,\\
\shortintertext{and}
&b_{\delta}(\theta_{0})\ :=\ \Erw\left[e^{\theta_{0}C}\prod_{k=2}^{N}\Psi(T_{k}\theta_{0})\1_{\{T_{N}\in [-\beta,-(1-\delta)\beta)\}}\right]\ \stackrel{\delta\to 0}{\longrightarrow}\ b(\theta_{0}).
\end{align*}
Therefore, we can pick $\delta\in (0,\delta_{1})$ such that (notice the difference here to the previous proof)
\begin{equation*}
1-a_{\delta}(\theta_{0})\,>\,\eta b_{\delta}(\theta_{0})\quad\text{and}\quad a_{\delta}(-\beta\theta_{0})\,<\,1,
\end{equation*}
and then $\theta_{*}\in\wh{\bbD}_{\Psi}$ and $\theta^{*}\in\bbD\backslash\wh{\bbD}_{\Psi}$ such that $(1-\delta)\theta^{*}<\theta_{*}<\theta_{0}\le\theta^{*}$ and
\begin{equation*}
1-a_{\delta}(\theta)\,>\,\eta b_{\delta}(\theta)\quad\text{and}\quad a_{\delta}(-\beta\theta)\,<\,1\quad\text{for all }\theta\in [\theta_{*},\theta^{*}].
\end{equation*}
Again, let $\delta$ also be small enough to guarantee \eqref{eq:C1} and \eqref{eq:C2} of Theorem \ref{thm:expint} if $\beta>0$, and $\|\max_{2\le k\le N}T_{k}\|_{\infty}<1-\delta$ if $\beta=0$. Note that since $\beta<1-\delta<1-\delta_{1}<1$, then $\beta^{2}\theta^*< \theta_*$.

\vspace{.1cm}
Defining the function $\Phi:[-\theta^{*},\theta^{*}]\to [1,\infty)$ by
\begin{equation*}
\Phi(\theta)\ :=\ 
\begin{cases}
\hfill \xi\eta,&\text{if }\theta\in [-\beta\theta^{*},-\beta\theta_{*}),\\
\Psi(\theta),&\text{if }\theta\in [-\beta\theta_{*},\theta_{*}],\\
\hfill \xi,&\text{if }\theta\in (\theta_{*},\theta^{*}]
\end{cases}
\end{equation*}
for some $\xi\ge\Psi(\theta_{*})\vee\eta^{-1}\Psi(-\beta\theta^{*})$, the remaining proof follows almost the same lines as the previous one with lines~\eqref{eq:boundforfg},~\eqref{eq:forA11} and~\eqref{eq:forA12} replaced by
\begin{align}
\Phi(\theta) a_{\delta}(\theta) + \Phi(-\beta\theta) b_{\delta}(\theta) +c_{\delta}(\theta) \ & \le\ \Phi(\theta)\label{eq:forA2}\\
\xi a_{\delta}(\theta) + \eta \xi b_{\delta}(\theta) + c_{\delta}(\theta) \ &\le\ \xi \label{eq:forA21}\\
\eta \xi a_{\delta}(-\beta\theta) + \Psi(\theta_*) b_{\delta}(-\beta\theta) +c_{\delta}(-\beta\theta) \ & \le\ \eta \xi \label{eq:forA22}
\end{align}
respectively. We arrive at the same conclusion that, for suitable $\xi$, $\Phi$ satisfies \eqref{eq:oversol} of Lemma \ref{prop:oversol}, thus producing the contradiction $[-\beta\theta^{*},\theta^{*}]\subset\bbD_{\Psi}$, i.e. $\theta^{*}\in\wh{\bbD}_{\Psi}$. Further details are therefore omitted.\qed
\end{proof}

\begin{proof}[of Theorem \ref{thm:exppos}]
This result now follows directly from Lemma \ref{lem:lem2} for the case $\beta=0$.\qed
\end{proof}

\begin{proof}[of Theorem \ref{thm:expint}]
Here a separate discussion of the two cases $\Prob[T_{N}=-\beta]>0$ and $\Prob[T_{N}=-\beta]=0$ is necessary.

\vspace{.1cm}
If the first alternative occurs, then $b(\theta)=\Erw[e^{\theta C}\1_{\{T_{N}=-\beta\}}]>0$ for any $\theta\in\bbD_{\Psi}$. By another appeal to \eqref{eq:funsfpe}, we then infer
$$ \infty\ >\ \Psi(\theta)\ \ge\ \Psi(-\beta\theta)\,b(\theta)\ >\ 0 $$
for any $\theta\in\bbD_{\Psi}$ and thereby $-\beta\theta,\theta\in\bbD_{\Psi}$. The assertion now follows from Lemma \ref{lem:lem1} if $\beta=1$, and from Lemma \ref{lem:lem2} if $\beta<1$.

\vspace{.1cm}
If $\Prob[T_{N}=-\beta]=0$, then \eqref{eq:funsfpe} provides us with
$$ \infty\ >\ \Psi((1-\eps)^{-1}\theta)\ \ge\ \Psi(-\beta\theta)\,\Erw\left[e^{\theta C}\1_{\{T_{N}\in (-\beta,-(1-\eps)\beta]\}}\right]\ >\ 0 $$
for any $\theta\in\interior(\bbD_{\Psi})$ and some $\eps\in (0,1)$ such that $(1-\eps)^{-1}\theta\in\bbD_{\Psi}$, thus giving $-\beta\theta\in\bbD_{\Psi}$. The assertion finally follows as before from Lemma \ref{lem:lem1} if $\beta=1$, and from Lemma \ref{lem:lem2} if $\beta<1$.\qed
\end{proof}

\begin{proof}[of Theorem \ref{thm:pois}]
The proof consists of two steps, establishing the upper bound and, assuming~\eqref{eq2:beta@1}, the lower bound. Note that, under the given assumptions,
$\R_{\geqslant}\subseteq \bbD_{\vph}$, $\Prob[T_{1}=1]=0$ and thus $a(\theta)=0$ for all $\theta\in\R$. Consequently, by Theorem \ref{thm:exppos}, $\R_{\geqslant}\subseteq\bbD_{\Psi}$.

\vspace{.2cm}
\textsc{Upper bound.} Consider the function $\Phi:\R_{\geqslant}\to [1,\infty)$, defined by
\begin{align*}
\Phi(\theta)\ :=\ 
\begin{cases}
\hfill\Psi(\theta),&\text{if }\theta\in [0,1],\\
\exp(\xi\theta^{q}e^{b\theta}),&\text{if }\theta\in (1,\infty),
\end{cases}
\end{align*}
with $b:=c^{+}/\gamma$, $q$ such that $\|\Sigma_{q}\|_{\infty}\le 1$, and $\xi>0$. We claim that $\xi$ can be chosen so large such that $\Phi$ is a supersolution of \eqref{eq:funsfpe} on $\R_{\geqslant}$, i.e., satisfies \eqref{eq:oversol} on this set. Since this is plain for $\theta\le 1$, we must only consider $\theta>1$. Put
$$ \zeta\,:=\,\sup_{\theta\in [0,1]}\frac{\log\Psi(\theta)}{\theta^{2}}. $$
As $\Phi(\theta)\le\exp(\zeta\theta^{2}+\xi\theta^{q}e^{b\theta})$ for all $\theta\ge 0$, it suffices to verify
$$ \Erw\left[\exp\left(\theta C+\zeta\theta^{2}\Sigma_{2}+\xi\theta^{q}\sum_{k=1}^{N}T_{k}^{q}e^{bT_{k}\theta}-\xi\theta^{q}e^{b\theta}\right)\right]\ \le\ 1\quad\text{for }\theta>1. $$
For any $t\in (0,1]$, the expectation on the right-hand side is bounded by
\begin{align}
\begin{split}\label{eq:multline}
&\Erw\left[\1_{\{T_{1}>t\}}\exp\left(\theta c^{+}+\zeta\Sigma_{2}+\xi\theta^{q}\sum_{k=2}^{N}T_{k}^{q}e^{bT_{k}\theta}\right)\right]\\
&\qquad +\ \Erw\left[\1_{\{T_{1}\le t\}}\exp\left(\theta c^{+}+\zeta\Sigma_{2}+\xi\theta^{q}\Sigma_{q}e^{bT_{1}\theta}-\xi\theta^{q}e^{b\theta}\right)\right].
\end{split}
\end{align}
Pick $\rho\in (0,1/2D)$ with $\gamma,\,D$ as in \eqref{eq1:beta@1} and put
$$ t\ =\ t(\theta,\rho)\ :=\ 1-\left(\frac{\rho}{D}\right)^{1/\gamma}e^{-\theta c^{+}/\gamma}. $$
Then, by using the assumptions of the theorem, the first term in \eqref{eq:multline} can be bounded by
\begin{align*}
&D(1-t)^{\gamma}\exp\left(\theta c^{+}+\zeta\|\Sigma_{2}\|_{\infty}+\xi\theta^{q}(1-t^{q})\exp\left(b(1-t^{q})^{1/q}\theta\right)\right)\\
&\qquad=\ \rho\,\exp\left(\zeta\|\Sigma_{2}\|_{\infty}+\xi\theta^{q}(1-t^{q})\exp\left(b(1-t^{q})^{1/q}\theta\right)\right).
\end{align*}
Since $1-t^{q}\le e^{-\theta c^{+}q/\gamma}$, we further see that
$$ \zeta\|\Sigma_{2}\|_{\infty}+\xi\theta^{q}(1-t^{q})\exp\left(b(1-t^{q})^{1/q}\theta\right)\ \le\ c_{1}\ <\ \infty\quad\text{for all }\theta>1 $$
and thus obtain, by choosing $\rho$ sufficiently small,
$$ \Erw\left[\1_{\{T_{1}>t\}}\exp\left(\theta c^{+}+\zeta\Sigma_{2}+\xi\theta^{q}\sum_{k=2}^{N}T_{k}^{q}e^{bT_{k}\theta}\right)\right]\ \le\ \rho\,e^{c_{1}}\ <\ \frac{1}{2} $$
for $\theta>1$. Here the reader should notice that the choice of $\rho$ depends on the value of $\xi$ (through $c_{1}$) which is still to be chosen.

\vspace{.1cm}
Turning to the second term in \eqref{eq:multline}, it therefore remains to verify that, uniformly in $\rho\in (0,1/2D)$,
$$ \Erw\left[\1_{\{T_{1}\le t\}}\exp\left(\theta c^{+}+\zeta\Sigma_{2}+\xi\theta^{q}\Sigma_{q}e^{bT_{1}\theta}-\xi\theta^{q}e^{b\theta}\right)\right]\ \le\ \frac{1}{2} $$
for $\theta>1$, $t$ as chosen above, and a suitable $\xi>0$. For this to be true, one can take $\xi$ such that
$$ \sup_{\theta>1}\,\exp\left(\theta c^{+}+\zeta\|\Sigma_{2}\|_{\infty}+\xi\theta^{q}e^{b\theta}\left(e^{-b\theta(1-t)}-1\right)\right)\ \le\ \frac{1}{2}, $$
(recall $\|\Sigma_{q}\|_{\infty}\le 1$), and this is indeed possible because (regardless of the value of $\rho$)
$$ b\theta(1-t)\ =\ b\theta e^{-b\theta}\left(\frac{\rho}{D}\right)^{1/\gamma}\ \le\ b\theta e^{-b\theta}\ \le\ e^{-1}\ \le\ \log 2 $$
and $e^{-u}-1\le-u/2$ for $0<u<\log 2$, giving
\begin{align*}
&\theta c^{+}+\zeta\|\Sigma_{2}\|_{\infty}+\xi\theta^{q}e^{b\theta}\left(e^{-b\theta(1-t)}-1\right)\\
&\qquad\le\ \theta c^{+}+\zeta\|\Sigma_{2}\|_{\infty}-e^{b\theta}\frac{\xi b\,\theta^{q+1}}{2}\\
&\qquad=\ \theta c^{+}+\zeta\|\Sigma_{2}\|_{\infty}-\frac{\xi b\,\theta^{q+1}}{2}
\end{align*}
with the last bound going to 0 as $\xi\to\infty$, uniformly in $\theta\ge 1$.

\vspace{.1cm}
Having shown that $\Phi$ satisfies \eqref{eq:oversol} on $\R_{\geqslant}$, Lemma \ref{prop:oversol} provides us with 
$$ \Erw e^{\theta W}\ =\ \Psi(\theta)\ \le\ \Phi(\theta)=\ \exp\left(\xi\theta^{q}e^{b\theta}\right)\quad\text{for all }\theta>1. $$
Picking an arbitrary $\eps\in(0,1)$, there exists $\theta_{0}>1$ such that
$$  \exp\left(\xi\theta^{q}e^{b\theta}\right)\ \le\ e^{(b+\eps)\theta}\text{for all }\theta\ge\theta_{0}. $$
As a consequence, we obtain
\begin{equation*}
\Prob[W>x]\ \le\ e^{-\theta x}\,\Erw e^{\theta W}\ \le\ \exp\left(e^{(b+\eps)\theta}-\theta x\right)
\end{equation*}
for $\theta>\theta_{0}$ and $x>(b+1)e^{\theta_{0}(b+1)}$. The minimum in $\theta$ on the right-hand side occurs at $\theta=\frac{1}{b+\eps}\log\frac{x}{b+\eps}$, and with this $\theta$ we obtain
\begin{align*}
\frac{\log\Prob[W>x]}{x\log x}\ \le\ -\frac{1}{b+\eps}+\frac{1+\log(b+\eps)}{\log x}
\end{align*}
and thereupon
$$ \limsup_{x\to\infty}\frac{\log\Prob[W>x]}{x\log x}\ \le\ -\frac{1}{b}\ =\ -\frac{\gamma}{c^{+}}, $$
since $\eps\in (0,1)$ was arbitrary.

\vspace{.2cm}
\textsc{Lower bound}. Now assume additionally that \eqref{eq2:beta@1} holds. Recall from \eqref{eq:defndelta} that since $\beta=0$
$$ N_{\eps}\ =\ \sum_{k=1}^{N}\1_{\{T_{k}>1-\eps\}} $$
which decreases to $N_{0}=\Sigma_{\infty}$ as $\eps\to 0$. Under the given assumptions, we can fix $\eps_{0}\in (0,1)$ so small that 
\begin{itemize}\itemsep2pt
\item \eqref{eq2:beta@1} is valid with $\eps=\eps_{0}$,
\item $z:=\Prob[\{T_{1}\le 1-\eps_{0},C\ge 0\} \cup \{0\le C\le c^{+}-\eps_{0}\}]>0$,
\item $\|\max_{2\le k\le N}T_{k}\|_{\infty}\le 1-\eps_{0}$, and 
\item $0<\Prob[T_{1}>1-\eps_{0}]<1$, hence $N_{\eps_{0}}\le 1$ a.s. and $0<\Erw N_{\eps_{0}}<1$. 
\end{itemize}
In the associated weighted branching model as specified in Section \ref{sec:preliminaries}, define the homogeneous stopping line (see \cite[Section 7]{AlsBigMei:12} for the general definition)
$$ \cT_{\eps}\ :=\ \{vk:T_{k}(v)\le 1-\eps\text{ or }C(v)\le c,\,\forall\,uj\prec v:C(u)>c,\,T_{j}(u)>1-\eps\}. $$
for $\eps\in (0,\eps_{0})$ and $c\in (c^{+}-\eps_{0},c^{+})$. Then the SFPE \eqref{eq:sfpe} then implies
\begin{equation}\label{eq:sfpestopline}
W\ \eqdist\ \sum_{v\in\cT_{\eps}}L(v)W(v)\ +\ \sum_{v\prec\cT_{\eps}}L(v)C(v).
\end{equation}
With $e_{k}:=(1,1,\ldots,1)\in\N^{k}$ for $k\in\N$ and $e_{0}:=\varnothing$, define the stopping time, consider the stopping time
$$ \tau\ :=\ \inf\{k\ge 0:T_{1}(e_{k})\le 1-\eps\text{ or }C(e_{k})\le c\} $$
along the leftmost path in the given weighted branching tree. Plainly, $\tau$ has a geometric distribution, viz.
$$ \Prob[\tau=k]\ =\ \Prob[T_{1}\le 1-\eps\text{ or }C\le c]\,\Prob[T_{1}>1-\eps,\,C>c]^{k}\quad\text{for }k\ge 0, $$
and we note for later use that, by \eqref{eq2:beta@1},
\begin{align}
\begin{split}\label{eq:later use}
\Prob[\tau=k,\,C(e_{k})\ge 0]\ &=\ \Prob[\{T_{1}\le 1-\eps,C\ge 0\}\cup\{0\le C\le c\}]\\
&\qquad\times\,\Prob[T_{1}>1-\eps,\,C>c]^{k}\\
&\ge\ z(\kappa d'\eps^{\gamma})^{k}\quad\text{for }k\ge 0,
\end{split}
\end{align}
where $\kappa:=\Prob[C>c]>0$.
Next, observe that
\begin{align*}
\cT_{\eps}\ &=\ \{e_{\tau+1}\}\cup\bigcup_{j=0}^{\tau}\{e_{k}j:1\le j\le N(e_{k})\},\\
&\{v\prec\cT_{\eps}\}\ =\ \{e_{k}:0\le k\le\tau\},
\end{align*}
and $C(v)>c$ for $k<\tau$, hence
\begin{equation}\label{eq:lower bound}
\sum_{v\prec\cT_{\eps}}L(v)C(v)\ \ge\ \sum_{k=0}^{\tau-1}L(e_{k})C(e_{k})\ >\ \sum_{j=0}^{\tau-1}(1-\eps)^{j}c\ =\ \frac{1-(1-\eps)^{\tau}}{\eps}\,c.
\end{equation}
Define the event
$$ \fX\ :=\ \left\{W(v)\ge 0\text{ for all }v\in\cT_{\eps}\right\}, $$
and further $r:=\Prob[W\ge 0]>0$ and $f(s):=\Erw[s^{N}]$ for $s\in [0,1]$. On the event $\{\tau=k,\,C(e_{k})\ge 0\}\cap\fX$, we have
\begin{align*}
&\hspace{1.7cm}|\{v<\cT_{\eps}\}|\ =\ k+1,\quad |\cT_{\eps}|\ \le\ 1+\sum_{j=0}^{k-1}N(e_{j}),
\shortintertext{and}
&\sum_{v\in\cT_{\eps}}L(v)W(v)\ +\ \sum_{v\prec\cT_{\eps}}L(v)C(v)\ >\ \sum_{j=0}^{k-1}(1-\eps)^{j}c\ =\ \frac{1-(1-\eps)^{k}}{\eps}\,c.
\end{align*}
Moreover, using \eqref{eq:later use},
\begin{align*}
\Prob[\{\tau=k,\,C(e_{k})\ge 0\}\cap\fX]\ &\ge\ z\kappa\,d'\eps^{k}\,\Erw\left[r^{1+\sum_{j=0}^{k-1}N(e_{j})}\right]\\
&=\ zr(\kappa d'\eps^{\gamma})^{k}f(r)^{k}
\end{align*}
and therefore, in view of \eqref{eq:sfpestopline} and \eqref{eq:lower bound},
$$ \Prob\left[\frac{W}{c}>\frac{1-(1-\eps)^{k}}{\eps}\right]\ \ge\ zr(\kappa d'\eps^{\gamma})^{k}f(r)^{k} $$
for all $k\in\N_{0}$. Setting $a:=\kappa d'f(r)$, this further yields
\begin{equation}\label{eq:prob estimate}
\Prob\left[\frac{W}{c}>\frac{1-(1-\eps)^{y}}{\eps}\right]\ \ge\ zr(a\eps^{\gamma})^{y+1}
\end{equation}
for all $y\in\R_{\geqslant}$. Now let $x\ge c$. Then we may choose, for some $\delta\in (0,\eps)$,
$$ \eps\,:=\,\frac{\delta c}{x}\quad\text{and}\quad y\,:=\,\frac{\log(1-\delta)}{\log(1-\eps)}\,=\,\frac{\log(1-\delta)}{\log(1-\delta c/x)} $$
to further infer from \eqref{eq:prob estimate}
\begin{align*}
\log\Prob[W>x]\ &\ge\ \log(zr)\ +\ \left(\frac{\log(1-\delta)}{\log(1-\delta c/x)}+1\right)\log(a\eps^{\gamma})\\
&=\ \log(zr)\ -\ \left(\frac{\log(1-\delta)}{\log(1-\delta c/x)}+1\right)\left(\gamma\log x-\log(a(\delta c)^{\gamma})\right)
\end{align*}
Keeping $\delta$ fixed and letting $x$ tend to $\infty$, we have $\log(1-\delta c/x)\simeq -\delta c/x$ and so
\begin{equation*}
\liminf_{x\to\infty}\frac{\log\Prob[W>x]}{x\log x}\ \ge\ \frac{\gamma\log(1-\delta)}{\delta c}\  \ge\ \frac{\gamma\log(1-\delta)}{\delta(c^{+}-\eps_{0})}.
\end{equation*}
This being true for any fixed $\eps_{0}$ and $\delta$ sufficiently small, we finally arrive at the desired conclusion
\begin{equation*}
\liminf_{x\to\infty}\frac{\log\Prob[W>x]}{x\log x}\ \ge\ -\frac{\gamma}{c^{+}}
\end{equation*}
by first letting $\delta\downarrow 0$, giving $\frac{\log(1-\delta)}{\delta}\to-1$, and then $\eps_{0}\downarrow 0$ (which implies $c\to c^{+}$).\qed

\end{proof}

\section*{Acknowledgment}

The main part of this work was done while the second author was visiting the Institute of Mathematical Statistics at M\"unster in September 2014 and 
February 2015. He gratefully acknowledges financial support and hospitality.

\bibliographystyle{abbrv}
\bibliography{StoPro}

\end{document}